\newtheorem{theorem}{Theorem}[section]
\newtheorem{proposition}[theorem]{Proposition}
\newtheorem{corollary}[theorem]{Corollary}
\newtheorem{example}[theorem]{Example}
\newtheorem{lemma}[theorem]{Lemma}
\theoremstyle{definition}
\newtheorem{definition}[theorem]{Definition}
\newtheorem{remark}[theorem]{Remark}
\DeclareMathOperator*{\argmin}{argmin}
\DeclareMathOperator*{\dom}{dom}
\DeclareMathOperator*{\epi}{epi}
\newcommand{\N}{\mathbb{N}}                                            % Symbol of natural numbers
\newcommand{\R}{\mathbb{R}}                                         % Symbol of real numbers                                  
\newcommand{\eps}{\varepsilon} 
\DeclareMathOperator{\CAT}{CAT}                                        % CAT(k) space.
\DeclareMathOperator{\Img}{Im}  
\newcommand{\inte}{\operatorname{int}}
\newcommand{\bd}{\operatorname{bd}}
\newcommand{\dist}{\operatorname{dist}}
\newcommand{\diam}{\operatorname{diam}}
\begin{document}

\title{Basic convex analysis in metric spaces with bounded curvature}

\author{Adrian S. Lewis$^{a}$, Genaro L\'{o}pez-Acedo$^{b}$, Adriana Nicolae$^{c}$}
\date{}
\maketitle

\begin{center}
{\scriptsize
$^{a}$School of Operations Research and Information Engineering, Cornell University, Ithaca, NY
\ \\
$^{b}$Department of Mathematical Analysis - IMUS, University of Seville, C/ Tarfia s/n, 41012 Seville, Spain
\ \\
$^{c}$Department of Mathematics, Babe\c s-Bolyai University, Kog\u alniceanu 1, 400084 Cluj-Napoca, Romania \\
\ \\
E-mail addresses: adrian.lewis@cornell.edu (A. S. Lewis), glopez@us.es (G. L\'{o}pez-Acedo),\\ 
anicolae@math.ubbcluj.ro (A. Nicolae)
}
\end{center}

\maketitle

\begin{abstract}
Differentiable structure ensures that many of the basics of classical convex analysis extend naturally from Euclidean space to Riemannian manifolds.  Without such structure, however, extensions are more challenging.  Nonetheless, in Alexandrov spaces with curvature bounded above (but possibly positive), we develop several basic building blocks. We define subgradients via projection and the normal cone, prove their existence, and relate them to the classical affine minorant property. Then, in what amounts to a simple calculus or duality result, we develop a necessary optimality condition for minimizing the sum of two convex functions. \\

\noindent {\em MSC:  65K10 (primary),  53C20 (secondary)} 
\\

\noindent {\em Keywords}: Subdifferential, \and normal cone, \and Alexandrov spaces.
\end{abstract}

\section{Introduction}\label{intro}

 Extensions of convex analysis tools to Alexandrov spaces find motivating applications, e.g., in averaging phylogenetic trees \cite{Bac14} (in the tree space of Billera, Holmes, and Vogtmann \cite{BilHolVog01}, which is an Alexandrov space with nonpositive curvature but not a manifold), restoration of manifold-valued images \cite{BerPerSte16}, or a conjecture of Donaldson on the convergence of long time solutions of the Calabi flow in K\"{a}hler geometry \cite{Str16}. We also refer to Ba\v{c}\'{a}k's survey \cite{Bac18} for other interesting developments in optimization and analysis in Alexandrov spaces. Zhang and Sra \cite{ZhaSra16} point out the importance of optimization in nonlinear spaces and discuss applications (especially when the space is a Riemannian manifold) in machine learning and theoretical computer science. 

The subdifferential is the main analytic tool used to deal with nonsmooth convex functions on Euclidean space.  It is an intuitive notion, fulfilling the role of the derivative in first order optimality conditions for such functions \cite{Roc70, BC17}. Beyond the traditional setting of linear spaces, however, the question of how to define the subdifferential is less immediate.  Rather than relying on the subgradient inequality, the approach we take here follows one standard route in nonconvex variational analysis (see \cite{RocWet98, ClLeStWo98, Mor06}), deriving the subdifferential instead from the notion of the normal cone, an idea we can build through the metric projection.

In the nonlinear setting, the interest to define a suitable notion of subdifferential has found motivation in two facts: on the one hand, the study of the notion of gradient flow in spaces which are not necessarily endowed with a natural linear or differentiable structure (see \cite{May98,AmbGigSav05,GigNob21}) and, on the other hand, the analysis of convergence of some algorithms in Riemannian manifolds (see \cite{Smi94,LiLoMa09}).

As far as we know, the first notion of subdifferential was given in the nonlinear case by Udri\c{s}te \cite{Udri94} in the setting of Riemannian manifolds using the subgradient inequality. Important properties of the subdifferential such as its connection to the normal cone and the directional derivative were later extensively analyzed (see, e.g., \cite{LiLoMaWa11,LiMoWaYa11}) and, as a consequence, generalizations of classical first order algorithms have been developed in this framework too. It is worth to mention that, in the case of Riemannian manifolds, the linear structure of the tangent space allows mimicking  most of the constructions related to the subdifferential as well as the applications from the Euclidean case.

The case of spaces without a differentiable structure presents additional difficulties, mainly because of the lack of a fully linear structure in the tangent space. In  \cite{AhAm} and \cite{MoBhHo15}, a notion of subdifferential was proposed by first giving different definitions for the dual space and using in the definition of the subgradient inequality instead of the scalar product a quasi-linearization function in terms of distances introduced in \cite{BerNik08}. In both cases, the lack of a suitable structure of the dual space and the definition of the scalar product limit the study to the case of nonpositive curvature and essential properties of the subdifferential are left out, such as the existence of subgradients at a continuity point of a convex function or a subdifferential calculus.

In order to elaborate a more coherent theory, in the present work we introduce the concept of subdifferential using normal cones. Since the notion of normal cone can be based on the metric projection whose properties are rich enough in Alexandrov spaces of curvature bounded above (see \cite{Ari16}),  we consider this context and briefly discuss in Section~\ref{sect-prelim} some of its fundamental properties, together with other notions used in what follows. However, we also impose local compactness in our framework in order to obtain an analogue of the supporting hyperplane theorem from finite-dimensional Hilbert spaces, which allows us to establish the existence of subgradients at a continuity point of a convex function. We use the notion of tangent space and scalar product essentially
due to Berestovski\u{\i} (see \cite{AleBerNik86}) and discussed in detail in \cite{Bri99}.  With these notions, Gigli and Nobili \cite{GigNob21} introduced recently the concept of minus-subdifferential in connection to the study of gradient flows in Alexandrov spaces of curvature bounded above. This definition and the one we develop here are different (in Remark \ref{rmk-notion-GigNob}, we discuss the relation between these two definitions). Moreover, a characterization of the minus-subdifferential in terms of normal cones is not obvious. Another recent paper \cite{ChaKohKum21} uses the notion of tangent space to study monotone vector fields in Alexandrov spaces of nonpositive curvature mentioning as a particular example the subdifferential.

In Sections \ref{sect-normal-cone} and \ref{sect-subdiff} we introduce the notions of normal cone to a convex set and subdifferential of a convex function as elements of the tangent space and study their basic properties, along with some immediate examples. As pointed out in Section \ref{sect-tangent-sp}, in the case of smooth Riemannian manifolds, there is a natural identification between the space of tangent vectors at a point and the tangent space as considered here. Consequently, using  Remark \ref{rmk-normal-cone}(v)  for the normal cone  and Proposition \ref{prop-subdiff-convex} for the subdifferential, our definitions coincide with the ones introduced in \cite{Udri94} and \cite{LiMoWaYa11} in this setting. 

To convey our culminating result -- Theorem \ref{thm-sum-rule}, which may be seen as a basic calculus or duality theorem -- it helps to review the classical case in a Euclidean space  $X$.  Given two convex functions  $f, g : X \to (-\infty, \infty]$, an archetypal decomposition question of a kind ubiquitous in modern optimization seeks to characterize points $x$ minimizing the sum $f+g$. A sufficient condition is trivial: if $y$ is a subgradient of $f$ at $x$,  meaning classically that $x$ minimizes the function  $ z \in X \mapsto f(z) - \langle y, z\rangle$,  and the opposite vector $-y$ is a  subgradient of  $g$ at $x$, then $x$ minimizes $f+g$.  Expressed concisely, the existence of opposite subgradients at $x$ is a sufficient condition for $x$ to minimize the sum.  More generally, the sum of any subgradient of $f$ and of $g$ at $x$ is easily seen to be a subgradient at $x$ of $f+g$.

Less obvious is the converse question, a necessary condition.  If a point $x$ minimizes the sum $f+g$, must there exist two opposite subgradients, $y$ and $-y$, for the two functions? A counterexample is the sum of the function $f : \R \to (-\infty,\infty]$ defined  by 
\[f(x) = \begin{cases} \infty, & \mbox{if } x < 0,\\ -\sqrt{x}, & \mbox{if } x \ge 0,\\ \end{cases}\]
with the indicator function of the set of nonpositive reals $\delta_{\R_{-}} : \R \to [0,\infty]$ defined by  
\[\delta_{\R_{-}}(x) = \begin{cases} 0, & \mbox{if } x \le 0,\\ \infty, & \mbox{if } x > 0.\\ \end{cases}\]
However, if $f$ is continuous at any point where $g$  is finite, then the answer is yes, and indeed, more generally, subgradients of $f+g$ at any point are characterized in that case as sums of subgradients of $f$ and of $g$ there.  Our culminating result, Theorem \ref{thm-sum-rule}, is a version of this result extended to Alexandrov spaces.

The existence of the opposite subgradients, $y$ and $-y$,  is the most basic form of the Fenchel duality theorem, and many modern optimization algorithms for minimizing the primal objective $f(x) + g(x)$ seek in tandem such a vector $y$ by implicitly maximizing a dual objective $-f^*(y) - g^*(-y)$ involving the conjugates  $f^*(y) = \sup_{z \in X}\left(\langle y, z \rangle - f(z)\right)$  of  $f$  and  $g^*$  of  $g$.  A popular general-purpose example is the alternating directions method of multipliers, surveyed in \cite{BoyParChu11}. For elementary reasons, the dual objective is never larger than the primal, and equality guarantees that the corresponding vectors $x$ and $y$ are primal and dual optimal, a case that holds exactly when $y$ and $-y$  are the requisite opposite subgradients of $f$ and $g$ at $x$. Whether Theorem \ref{thm-sum-rule} has an analogous dual interpretation we do not pursue here. However, in Corollary \ref{cor-separation}, we give an application to a counterpart in Alexandrov spaces of a separation result for two nonempty, convex, closed, and disjoint sets, at least one of which is bounded. As an optimization tool, separation of disjoint convex sets needs no emphasis, either for fundamentals \cite{Roc70, MorNam22} or algorithms \cite{Ber15}, whence our broad interest in nonlinear extensions such as Corollary \ref{cor-separation}.  A particular and fundamental machine learning example is the ``support vector machine'', which, in its simplest form, trained on some binary-labeled data points in Euclidean space, classifies new points using a ``support vector'' determining a separating hyperplane for the two convex hulls \cite{SraNowWri11}. If the data instead lie in a manifold (of low rank matrices, for example) or some more general nonlinear space, then rather than resorting to the lifting procedures standard for support vector machines, we speculate that the pair of opposite tangent elements in our nonlinear separation result, Corollary \ref{cor-separation}, could serve as the classifier. This is a topic of ongoing investigation.

\section{Preliminaries}\label{sect-prelim}

In this section, we give some basic notions and properties of geodesic metric spaces. We refer the reader to \cite{AleKapPet23,Bri99,Bur01} for more details.

\subsection{Geodesic metric spaces and $\CAT(\kappa)$ spaces}

Let $(X,d)$ be a metric space. We denote the open (resp., closed) ball centered at $x\in X$ with radius $r>0$ by $B(x,r)$ (resp., $\overline{B}(x,r)$). For $C \subseteq X$, we denote the {\it diameter} of $C$ by $\diam(C)$. The {\it metric projection} $P_C$ onto $C$ is the mapping $P_C : X\to 2^C$ defined by
\[P_C(z)=\{ y \in C : d(z,y)=\dist(z,C)\} \quad \text{for all } z\in X,\]
where  $\dist(z,C) = \inf_{y \in C}d(z,y)$.

A {\it geodesic} is an isometric mapping $\gamma:[0,l] \subseteq \R \to X$. The image $\gamma([0,l])$ is called a {\it geodesic segment}. If instead of the interval $[0,l]$ one considers $\R$, then the image of $\gamma$ is called a {\it geodesic line}. If every two points in $X$ are joined by a (unique) geodesic segment, then $X$ is called a {\it (uniquely) geodesic space}. We say that a subset $C$ of a geodesic space is {\it convex} if for every $x,y \in C$, all geodesic segments with endpoints $x$ and $y$ are contained in $C$. 

Assume that $(X,d)$ is a uniquely geodesic space. For $x,y \in X$, denote the unique geodesic segment with endpoints $x$ and $y$ by $[x,y]$ and, given $t \in [0,1]$, let $(1-t)x+ty$ stand for the unique point belonging to $[x,y]$ whose distance to $x$ equals $td(x,y)$. A set $C \subseteq X$ is said to satisfy the {\it betweenness property} if for every four pairwise distinct points $x, y, z, w \in C$, if $y \in [x,z]$ and $z \in [y,w]$, then $y,z \in [x,w]$. This property was studied in \cite{DimWhi81, Nic13, KohLopNic21}.

We say that $X$ has the {\it geodesic extension property around $x \in X$} if there exists a positive constant $R$ such that for any distinct $y,z \in B(x,R)$ with $d(y,z) < R$, the geodesic from $y$ to $z$ can be extended beyond $z$ to a geodesic of length $R$. We will often emphasize the dependence of the constant on the point by denoting it $R_x$. We say that $X$ has the {\it geodesic extension property} if every geodesic segment (that is not reduced to a point) is contained in a geodesic line. One can show that if $X$ is complete and satisfies the betweenness property and the geodesic extension property around every $x \in X$, then $X$ has the geodesic extension property.

Let $(X,d)$ be a metric space, and consider on the Cartesian product $X \times \R$ the metric
\[d_2((x_1,y_1),(x_2,y_2)) = \sqrt{d(x_1,x_2)^2 + |y_1 - y_2|^2},\]
where $x_1, x_2 \in X$ and $y_1, y_2 \in \R$. 

If $X$ is locally compact, then $X \times \R$ is locally compact as well. At the same time, if $X$ is a (uniquely) geodesic space, then so is $X \times \R$. Moreover, geodesics in $X \times \R$ are given in terms of geodesics in $X$ and in $\R$: if $\gamma : [0,r] \to X$ and $\alpha : [0,s] \to \R$ are geodesics, where $l = \sqrt{r^2 + s^2} > 0$, then $\sigma : [0,l] \to X \times \R$ defined by $\sigma(t) = (\gamma(rt/l), \alpha(st/l))$ for all $t \in [0,l]$ is a geodesic. Conversely, if $l > 0$ and $\sigma : [0,l] \to X \times \R$ is a geodesic from $(x,\lambda)$ to $(y,\omega)$, taking $r = d(x,y)$ and $s = |\lambda - \omega|$, then $\sigma(t) = (\gamma(rt/l), \alpha(st/l))$ for all $t \in [0,l]$, where $\gamma : [0,r] \to X$ and $\alpha : [0,s] \to \R$ are geodesics from $x$ to $y$ and from $\lambda$ to $\omega$, respectively.

Suppose that $(X,d)$ is a geodesic space. Given $\lambda \in \R$, if $X$ has the geodesic extension property around $x \in X$, then $X \times \R$ also has the geodesic extension property around $(x,\lambda)$ with the same constant.

A function $f : X \to (-\infty,\infty]$ is called {\it convex} if for any geodesic $\gamma : [0,l] \to X$ and any $t \in [0,1]$,
$f(\gamma(lt)) \le (1-t)f(\gamma(0)) + t f(\gamma(l))$. The (effective) {\it domain} of $f$ is defined by $\dom f = \{x \in X \mid f(x) < \infty\}$. The {\it epigraph} of $f$ is $\epi f = \{ (x,\lambda) \in \dom f \times \R \mid \lambda \ge f(x)\}$. One can easily see that $\epi f$ is closed if and only if $f$ is lower semicontinuous, which is also equivalent to the fact that its sublevel sets are closed. If $f$ is convex, then $\dom f$ and $\epi f$ are convex.

For $\kappa \in \R$, let $M_\kappa^2$ denote the complete, simply connected, $2$-dimensional Riemannian manifold of constant sectional curvature $\kappa$. We denote the diameter of $M_\kappa^2$ by $D_\kappa$. In other words, $D_\kappa = \infty$ if $\kappa \le 0$, while $D_\kappa = \pi/\sqrt{\kappa}$ if $\kappa > 0$. 

A {\it geodesic triangle} is the union of three geodesic segments joining three points. We say that a triangle $\Delta(\overline{x}_1, \overline{x}_2, \overline{x}_3)$ in $M_\kappa^2$ is a {\it comparison triangle} for a geodesic triangle $\Delta(x_1,x_2,x_3)$ if $d(x_i,x_j) = d_{M_\kappa^2}(\overline{x}_i,\overline{x}_j)$ for all $i,j \in \{1,2,3\}$. 

A metric space is called a {\it $\CAT(\kappa)$ space} (also known as a space of curvature bounded above by $\kappa$ in the sense of Alexandrov) if every two points at distance less than $D_\kappa$ can be joined by a geodesic and geodesic triangles having perimeter less than $2D_\kappa$ are not thicker than the comparison triangles in $M_\kappa^2$.

Suppose next that $X$ is a $\CAT(\kappa)$ space. Points in $X$ at distance less than $D_\kappa$ are joined by a unique geodesic segment and this segment varies continuously with its endpoints. Moreover, balls of radius smaller than $D_\kappa/2$ are convex. If $\kappa \ge 0$, $X \times \R$ is a $\CAT(\kappa)$ space as well. 

Sets of diameter less than $D_\kappa$ if $\kappa > 0$ satisfy the betweenness property. This shows that if a complete $\CAT(0)$ space has the geodesic extension property around every point, then it also has the geodesic extension property.  

If the diameter of $X$ is smaller than $D_\kappa/2$ when $\kappa > 0$, then $X$ is {\it $2$-uniformly convex} (see \cite{Kuw14, Oht07}) in the sense that there exists a parameter $K = K(\kappa, \diam(X)) >0$ such that for all $x,y,z\in X$ and all $t\in[0,1]$,
\[d(z, (1-t)x + ty)^2\leq(1-t)\,d(z,x)^2+t\,d(z,y)^2-K\,t(1-t)\,d(x,y)^2.\]
The above inequality with $K = 1$ characterizes $\CAT(0)$ spaces, in fact.

The Alexandrov angle between two nonconstant geodesics $\gamma:[0,l]\to X$ and $\gamma':[0,l']\to X$ issuing at the same point $x = \gamma(0) = \gamma'(0)$ is well-defined and can be determined as $\angle(\gamma,\gamma') = \lim_{t,t' \searrow 0} \angle_{\overline{x}}\left(\overline{\gamma(t)},\overline{\gamma'(t')}\right)$, 
where $\Delta(\overline{\gamma(t)}, \overline{x}, \overline{\gamma'(t')})$ is a comparison triangle in $\R^2$. We also denote it by $\angle_x(y,z)$, where $y \in \gamma((0,l])$ and $z \in \gamma'((0,l'])$.

Let $C \subseteq X$ be a convex set and $x \in X \setminus C$ with $\dist(x,C) < D_\kappa/2$. If $P_C(x) \ne \emptyset$, then $P_C(x)$ is a singleton. If $z = P_C(x)$ and $y \in C$ with $y \ne z$, then $\angle_z(x,y) \ge \pi/2$. If $C$ is additionally complete in the induced metric, then $x$ always has a nearest point in $C$, and hence $P_C(x)$ is a singleton. We refer the reader to \cite{EspFer09, Ari16} for a more thorough discussion on the behavior of the metric projection in $\CAT(\kappa)$ spaces.

\subsection{Tangent spaces}\label{sect-tangent-sp}

{\it In what follows, if nothing else is mentioned about the context, we always suppose that $(X,d)$ is a locally compact $\CAT(\kappa)$ space, where $\kappa > 0$. To simplify the exposition and without loss of generality (see Remark \ref{rmk-normal-cone}(iv)), we also suppose that $X$ is uniquely geodesic. In addition, assume that for all $x \in X$, $X$ has the geodesic extension property around $x$ with a constant smaller than $D_\kappa/2$. This constant will be denoted by $R_x$.} (Actually, in the subsequent discussion we only need to impose the geodesic extension property and the existence of a compact neighborhood around a fixed point where we consider the tangent space of $X$.)

If $x$ is a point in a smooth Riemannian manifold, then there exists a neighborhood of $x$ that is a $\CAT(\kappa)$ space for some suitable $\kappa \in \R$ (see, e.g., \cite[Chapter II.1, Appendix]{Bri99}). Moreover, since the injectivity radius in a Riemannian manifold is a continuous function (see \cite{Bou23}), there exists a ball centered at $x$ that is a $\CAT(\kappa)$ space having the geodesic extension property around $x$ with an appropriate constant.

Fix $x \in X$. Denote by $\Theta_x X$  the set of all nonconstant geodesics issuing at $x$. The Alexandrov angle induces a metric on the set $\Sigma_x X$ of equivalence classes of geodesics in $\Theta_x X$, where two geodesics $\gamma, \eta\in \Theta_x X$ are considered equivalent if $\angle(\gamma,\eta) = 0$. Note that if $X$ is a Riemannian manifold, then $\Sigma_x X$ is isometric to the unit sphere in the tangent space of $X$ at $x$.

For a geodesic $\gamma \in \Theta_x X$, we denote by $[\gamma]$ its equivalence class. At the same time, when working with an equivalence class in $\Sigma_x X$, we assume that it is represented by a geodesic in $\Theta_x X$. Since $X$ has the geodesic extension property around $x$, we can always suppose that a representative of an equivalence class is a geodesic of length at least $R_x$, where $R_x$ is the constant for the geodesic extension property around $x$. 

Under our assumptions, the metric space $(\Sigma_x X, \angle)$ is complete. We will refer to the elements in $\Sigma_x X$ as {\it directions} at $x$ and to $(\Sigma_x X,\angle)$ as the {\it space of directions} at $x$. The {\it tangent space} $T_x X$ of $X$ at $x$ is the Euclidean cone over the metric space $(\Sigma_x X, \angle)$ (see \cite[Chapter I, Definition 5.6]{Bri99}). More precisely, $T_x X = (\Sigma_x X \times [0,\infty))/\sim$, where for $([\gamma],r), ([\eta],s) \in \Sigma_x X \times [0,\infty)$, $([\gamma],r) \sim ([\eta],s)$ if and only if ($r = s = 0$) or ($r = s > 0$ and $[\gamma] = [\eta]$). The equivalence class of $([\gamma],0) \in T_x X$ is called the {\it origin} of $T_x X$ and will be denoted by $o_x$. (When dealing with the origin $o_x$, the direction at $x$ bears no relevance and sometimes, to simplify the writing, we consider the geodesic constantly equal to $x$.)

Observe that despite the terminology, this construction is, in general, merely a metric cone. However, in the case when $X$ is a Riemannian manifold, this notion coincides with the traditional notion of tangent space. Besides \cite{Bri99, AleKapPet23}, the preliminary part of \cite{LanSch97} describes in detail the construction and properties of tangent spaces.

The {\it metric} $d_x$ on $T_x X$ is defined for $v = ([\gamma],r), w = ([\eta],s) \in T_x X$ by 
\[d_x(v,w)^2 = r^2 + s^2 - 2rs\cos \angle (\gamma,\eta),\]
while the {\it scalar product} of $v$ and $w$ is given by $\langle v,w \rangle = rs\cos \angle(\gamma, \eta)$. The multiplication with a nonnegative scalar $\lambda \ge 0$ is defined for $v =  ([\gamma],r)$ by $\lambda v = ([\gamma],\lambda r)$. We say that  $v = ([\gamma],r), w = ([\eta],s) \in T_x X$ are {\it opposite} to each other if ($r = s = 0$) or ($r = s > 0$ and $\angle(\gamma,\eta) = \pi$). Observe that, under our assumptions on $X$, $T_x X$ is a complete $\CAT(0)$ space (see \cite{Nik95,Bri99}). 

We finish this section with the following auxiliary result concerning the tangent space of $X \times \R$.

\begin{lemma}\label{lemma-add-scalar-prod}
Let $(x,\lambda) \in X \times \R$, and, for $i \in \{1,2\}$, let $\sigma_i : [0,l_i] \to X \times \R$ be a nonconstant geodesic issuing at $(x,\lambda)$ that is written as $\sigma_i(t) = \left(\gamma_i(r_it/l_i), \alpha_i(s_it/l_i)\right)$ for all $t \in [0,l_i]$, where $l_i^2 = r_i^2 + s_i^2$, and $\gamma_i : [0,r_i] \to X$ and $\alpha_i : [0,s_i] \to \R$ are geodesics. Then the following hold:
\begin{enumerate}
\item[(i)] $\langle ([\sigma_1],l_1), ([\sigma_2],l_2) \rangle = \langle  ([\gamma_1],r_1), ([\gamma_2],r_2)\rangle + \langle  ([\alpha_1],s_1), ([\alpha_2],s_2) \rangle$.
\item[(ii)] $[\sigma_1] = [\sigma_2]$ if and only if $\left([\gamma_1] = [\gamma_2], [\alpha_1] = [\alpha_2], r_1/l_1 = r_2/l_2, s_1/l_1=s_2/l_2\right)$. 
\end{enumerate}
\end{lemma}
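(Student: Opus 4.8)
The plan is to reduce both parts to a single ``angle-addition formula'' for the Alexandrov angle in the product space. Write $u_i = r_i/l_i$ and $v_i = s_i/l_i$, so that $u_i^2 + v_i^2 = 1$ and $u_i,v_i \ge 0$, and note that $\sigma_i$ nonconstant forces $l_i>0$, so at most one leg of each $\sigma_i$ is a constant geodesic. I claim that
\[
\cos\angle(\sigma_1,\sigma_2) = u_1 u_2\cos\angle(\gamma_1,\gamma_2) + v_1 v_2\cos\angle(\alpha_1,\alpha_2),
\]
with the convention that a summand is read as $0$ whenever the relevant leg degenerates (that is, when $r_i=0$, so that $([\gamma_i],r_i)=o_x$, or when $s_i=0$). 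Granting this, part~(i) follows immediately: multiplying through by $l_1 l_2$ gives $l_1 l_2\cos\angle(\sigma_1,\sigma_2) = r_1 r_2\cos\angle(\gamma_1,\gamma_2) + s_1 s_2\cos\angle(\alpha_1,\alpha_2)$, which by the definition of the scalar product on the respective tangent spaces is exactly the asserted identity $\langle([\sigma_1],l_1),([\sigma_2],l_2)\rangle = \langle([\gamma_1],r_1),([\gamma_2],r_2)\rangle + \langle([\alpha_1],s_1),([\alpha_2],s_2)\rangle$.

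To establish the formula I would apply the Euclidean law of cosines to comparison triangles and pass to the limit as in the definition of the Alexandrov angle. Since $\sigma_i$ has unit speed, $d_2((x,\lambda),\sigma_i(t)) = t$; since the squared metric on $X\times\R$ splits, $d_2(\sigma_1(t),\sigma_2(t'))^2 = d\big(\gamma_1(u_1t),\gamma_2(u_2t')\big)^2 + \big|\alpha_1(v_1t) - \alpha_2(v_2t')\big|^2$. The law of cosines in the comparison triangle for $\Delta(\gamma_1(u_1t),x,\gamma_2(u_2t'))$ gives $d(\gamma_1(u_1t),\gamma_2(u_2t'))^2 = (u_1t)^2 + (u_2t')^2 - 2u_1u_2tt'\cos\overline{\angle}_x(\gamma_1(u_1t),\gamma_2(u_2t'))$ (where $\overline{\angle}_x$ is the Euclidean comparison angle at $x$), while since geodesics in $\R$ are affine one has the \emph{exact} identity $|\alpha_1(v_1t) - \alpha_2(v_2t')|^2 = (v_1t)^2 + (v_2t')^2 - 2v_1v_2tt'\cos\angle(\alpha_1,\alpha_2)$. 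Adding these and using $u_i^2+v_i^2=1$ collapses the quadratic terms to $t^2+t'^2$; feeding the result into the law of cosines for the comparison triangle $\Delta(\sigma_1(t),(x,\lambda),\sigma_2(t'))$ in $X\times\R$ yields $\cos\overline{\angle}_{(x,\lambda)}(\sigma_1(t),\sigma_2(t')) = u_1u_2\cos\overline{\angle}_x(\gamma_1(u_1t),\gamma_2(u_2t')) + v_1v_2\cos\angle(\alpha_1,\alpha_2)$, and letting $t,t'\searrow 0$ and using that the Alexandrov angle is the limit of the comparison angles gives the claim. When $r_i=0$ one keeps instead the quantity $2u_1u_2tt'\cos\overline{\angle}_x(\gamma_1(u_1t),\gamma_2(u_2t')) = (u_1t)^2 + (u_2t')^2 - d(\gamma_1(u_1t),\gamma_2(u_2t'))^2$, which stays meaningful and tends to $0$, so no division by zero occurs; similarly when $s_i=0$.

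For part~(ii), recall that $[\sigma_1]=[\sigma_2]$ exactly when $\angle(\sigma_1,\sigma_2)=0$, i.e.\ $\cos\angle(\sigma_1,\sigma_2)=1$. By Cauchy--Schwarz, $u_1u_2 + v_1v_2 \le \sqrt{u_1^2+v_1^2}\,\sqrt{u_2^2+v_2^2} = 1$, with equality precisely when $(u_1,v_1)=(u_2,v_2)$ (the vectors being unit with nonnegative entries), i.e.\ $r_1/l_1=r_2/l_2$ and $s_1/l_1=s_2/l_2$; and since $\cos\angle(\gamma_1,\gamma_2)\le 1$, $\cos\angle(\alpha_1,\alpha_2)\le 1$ with coefficients $u_1u_2,v_1v_2\ge 0$, the formula gives $\cos\angle(\sigma_1,\sigma_2) \le u_1u_2 + v_1v_2 \le 1$. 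Hence $\cos\angle(\sigma_1,\sigma_2)=1$ forces both inequalities to be equalities: the second forces $r_1/l_1=r_2/l_2$ and $s_1/l_1=s_2/l_2$ (so $u_1=u_2$, $v_1=v_2$), and then the first forces $\cos\angle(\gamma_1,\gamma_2)=1$, hence $[\gamma_1]=[\gamma_2]$, whenever $u_1u_2>0$, and $\cos\angle(\alpha_1,\alpha_2)=1$, hence $[\alpha_1]=[\alpha_2]$, whenever $v_1v_2>0$; the remaining degenerate sub-cases ($u_1=u_2=0$ or $v_1=v_2=0$) hold trivially under the stated conventions. The converse implication is a direct substitution into the formula. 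I expect the only delicate point to be this bookkeeping of constant legs (and the conventions identifying $([\gamma_i],0)$ with $o_x$), together with the reminder that in our setting the Alexandrov angle is genuinely the limit of the comparison angles — the core of the argument is nothing more than the law of cosines applied twice.
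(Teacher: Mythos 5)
Your proposal is correct and follows essentially the same route as the paper: both derive the angle-addition identity $\cos\angle(\sigma_1,\sigma_2)=\frac{r_1r_2}{l_1l_2}\cos\angle(\gamma_1,\gamma_2)+\frac{s_1s_2}{l_1l_2}\cos\angle(\alpha_1,\alpha_2)$ from the law of cosines in comparison triangles together with the splitting of the product metric, then obtain (i) by scaling and (ii) by the Cauchy--Schwarz equality analysis. The only (immaterial) differences are that the paper works on the diagonal $t=t'$ and encodes $\cos\angle(\alpha_1,\alpha_2)$ as a sign $c=\pm 1$, while you keep two parameters and are slightly more explicit about the degenerate legs.
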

\begin{proof}
Note first that for $i \in \{1,2\}$, $\alpha_i(t) = \lambda + t$ for all $t \in [0,s_i]$ or $\alpha_i(t) = \lambda - t$ for all $t \in [0,s_i]$.
If for both $i=1$ and $i=2$, $\alpha_i$ has the same form, let $c = 1$. Otherwise, let $c = -1$. Denote also $a_i = r_i/l_i$ and $b_i = s_i/l_i$, where $i \in \{1,2\}$.

(i) Take $t \in (0,\min\{l_1,l_2\}]$. For the geodesic triangle $\Delta(\sigma_1(t), (x,\lambda), \sigma_2(t))$, consider a comparison triangle $\Delta(\overline{\sigma_1(t)}, \overline{(x,\lambda)}, \overline{\sigma_2(t)})$ in $\R^2$ and denote by $\theta(t)$ its interior angle at $\overline{(x,\lambda)}$. Then $\cos\theta(t) = 1 - d_2(\sigma_1(t),\sigma_2(t))^2/(2t^2)$.
As 
\[d_2(\sigma_1(t),\sigma_2(t))^2 = d(\gamma_1(a_1t), \gamma_2(a_2t))^2 + (b_1t)^2 + (b_2t)^2 -2 c b_1b_2t^2,\] 
it follows that 
\[\cos \theta(t) = \frac{1}{2}\left(a_1^2 + a_2^2 - \frac{d(\gamma_1(a_1t), \gamma_2(a_2t))^2}{t^2}\right) +c b_1b_2.\]
Now take a comparison triangle  $\Delta(\overline{\gamma_1(a_1t)}, \overline{x}, \overline{\gamma_2(a_2t)})$ in $\R^2$ and denote by $\rho(t)$ its interior angle at $\overline{x}$. Then 
\[\cos \rho(t) = \frac{1}{2a_1a_2}\left(a_1^2 + a_2^2 - \frac{d(\gamma_1(a_1t), \gamma_2(a_2t))^2}{t^2}\right).\]
Hence, $\cos \theta(t) = a_1a_2\cos \rho(t) + cb_1b_2$. Taking limit as $t \searrow 0$, we conclude that 
\begin{equation}\label{lemma-add-scalar-prod-eq3}
\cos \angle(\sigma_1,\sigma_2) = a_1a_2\cos \angle(\gamma_1,\gamma_2) + cb_1b_2,
\end{equation}
from where one obtains the desired equality.

(ii) If $[\gamma_1] = [\gamma_2]$, $[\alpha_1] = [\alpha_2]$, $a_1 = a_2$, and $b_1 = b_2$, then $\cos \angle(\gamma_1,\gamma_2)=1$, $c=1$, and \eqref{lemma-add-scalar-prod-eq3} gives $\cos \angle(\sigma_1,\sigma_2) = a_1^2 + b_1^2 = 1$, and hence $[\sigma_1] = [\sigma_2]$.

Suppose now $[\sigma_1] = [\sigma_2]$. Then $\cos \angle(\sigma_1,\sigma_2) =1$ and, using again \eqref{lemma-add-scalar-prod-eq3}, we obtain that
$1 =  a_1a_2\cos \angle(\gamma_1,\gamma_2) + cb_1b_2 \le a_1a_2 + b_1b_2,$
so
\[1 \le (a_1a_2 + b_1b_2)^2  = (a_1^2+b_1^2)(a_2^2+b_2^2) - (a_1b_2 - a_2b_1)^2 = 1 - (a_1b_2 - a_2b_1)^2.\]
We conclude that $\cos \angle(\gamma_1,\gamma_2) = 1$, $c = 1$, and $a_1b_2 = a_2b_1$. Hence, $[\gamma_1] = [\gamma_2]$ and $[\alpha_1] = [\alpha_2]$. Again taking into account that $a_1^2 + b_1^2 = a_2^2 + b_2^2 = 1$, we obtain $a_1 = a_2$ and $b_1 = b_2$.
\end{proof}

\section{Normal cones}\label{sect-normal-cone} 

Traditionally, a central idea in smooth analysis is the approximation of a smooth function and a smooth manifold by a linear function and a linear subspace, respectively. In the nonsmooth case, a function is approximated by a family of linear functions, while a set is approximated by cones. Normal cones can be defined in terms of the metric projection. 
In $\R^n$, given a convex set $C$ and a point $x \in C$, vectors in
\[N_C(x) = \{v \in \R^n\mid x = P_C(x+v)\} = \{v \in \R^n\mid \langle y - x, v\rangle \le 0 \text{ for all } y \in C\}\]
are called normals to $C$ at $x$. 

The metric projection is in fact a purely metric notion and exhibits a sufficiently regular behavior in nonlinear settings with a rich enough geometry (see \cite{Ari16}). For this reason, it is natural to consider using normal cones to define the notion of subdifferential of a function and get a meaningful subdifferential calculus in nonlinear spaces. 

Let $C \subseteq X$ be nonempty and convex, and let $x \in C$. 

\begin{definition}\label{def-prox-normal-cone}
The {\it normal cone} to $C$ at $x$ is given by
\[N_C(x) = \{([\gamma],r) \in T_x X \mid r = 0 \text{ or } \left(x = P_C(\gamma(t)) \text{ for some } t > 0\right)\}.\]
Elements in $N_C(x)$ are called {\it normals} to $C$ at $x$.
\end{definition}

In connection to this definition, the following facts are immediate.
\begin{remark} \label{rmk-def-proximal-normal-cone}
\begin{enumerate}
\item[(i)] If $x = P_C(\gamma(t))$, then $x = P_C(\gamma(s))$ for all $s \in [0,R_x]$.

\item[(ii)] $N_C(x)$ is well-defined in the sense that if $\gamma, \eta \in \Theta_x X$ with $\eta \in [\gamma]$ and $x = P_C(\gamma(t))$ for some $t > 0$, then 
$x = P_C(\eta(s))$ for all $s \in [0,R_x]$. 

Suppose there exists $y \in C$ such that $d(y, \eta(R_x)) < d(x, \eta(R_x))$. Then
\[\pi/2 \le \angle_{x}(\gamma(t),y) \le  \angle_{x}(\gamma(t),\eta(R_x)) + \angle_{x}(\eta(R_x),y) = \angle_{x}(\eta(R_x),y),\]
from where $d(y, \eta(R_x)) \ge d(x, \eta(R_x))$, a contradiction. We conclude that $x = P_C(\eta(R_x))$, from where $x = P_C(\eta(s))$ for all $s \in [0,R_x]$.
\end{enumerate}
\end{remark}

In the next remark, we collect some elementary properties of normal cones.
\begin{remark} \label{rmk-normal-cone}
\begin{enumerate}
\item[(i)] $N_C(x)$ is closed under multiplication with a nonnegative scalar. Thus, for $r > 0$, $([\gamma],r) \in N_C(x)$ if and only if $([\gamma],1) \in N_C(x)$. 

\item[(ii)] If $\diam(C) < D_\kappa$, then $\overline{C}$ is convex and $N_C(x) = N_{\overline{C}}(x)$.

\item[(iii)] If $D \subseteq C$ is convex and $x \in D$, then $N_C(x) \subseteq N_D(x)$.

\item[(iv)] Let $V$ be a convex neighborhood of $x$. Then $N_C(x) = N_{C \cap V}(x)$. This allows us to assume that the diameter of the set $C$ is as small as needed. 

Indeed, let $r > 0$ such that $B(x,r) \subseteq V$. Using (iii), it is enough to show that if $([\gamma],1) \in N_{C \cap V}(x)$, then $([\gamma],1) \in N_C(x)$. Suppose $([\gamma],1) \in N_{C \cap V}(x)$. Then there exists $t \in (0,r/2)$ such that $x = P_{C \cap V}(\gamma(t)) $. For $y \in C \setminus V$, 
\[r \le d(x,y) \le d(x,\gamma(t)) + d(\gamma(t),y) < r/2 + d(\gamma(t),y),\] 
so $d(\gamma(t),y) > r/2 > d(\gamma(t),x)$. This shows that  $x = P_{C}(\gamma(t))$, and we conclude that $([\gamma],1) \in N_C(x)$.

\item[(v)] $N_C(x) = \{([\gamma],r) \in T_x X \mid \langle ([\gamma],r),([\gamma^y],d(x,y)) \rangle \le 0 \text{ for all } y \in C\},$ where for $y \in C$, $\gamma^y$ is the geodesic from $x$ to $y$. 

Indeed, $([\gamma],1) \in N_C(x)$ if and only if $x = P_C(\gamma(R_x))$ if and only if $ \angle(\gamma,\gamma^y) \ge \pi/2$ for all $y \in C$ with $y \ne x$.

\item[(vi)] Let $(x_n)$ be a sequence in $C$ converging to $x$, $l \in (0,D_\kappa/2)$, and, for $n \in \N$, let $\gamma_n : [0,l] \to X$ be a nonconstant geodesic issuing at $x_n$. If $([\gamma_n],1) \in N_C(x_n)$ for all $n \in \N$ and $(\gamma_n)$ converges pointwise to a geodesic $\gamma$, then $([\gamma],1) \in N_C(x)$.

Indeed, $P_C(\gamma_n(l)) = x_n$ for all $n \in \N$. Let $y \in C$. Then
\begin{align*}
d(\gamma(l),x) &\le d(\gamma(l), \gamma_n(l)) + d(\gamma_n(l),x_n) + d(x_n,x)\\
&  \le d(\gamma(l), \gamma_n(l)) + d(\gamma_n(l),y) + d(x_n,x),
\end{align*}
for all $n \in \N$. We get that $d(\gamma(l),x) \le d(\gamma(l),y)$ for all $y \in C$, which shows that $P_C(\gamma(l)) = x$, so $([\gamma],1) \in N_C(x)$.

\end{enumerate}
\end{remark}

We also have the following result.

\begin{proposition} \label{prop-normal-cone-int}
\begin{enumerate}
\item[(i)] If $x \in \inte(C)$, then $N_C(x) = \{o_x\}$.
\item[(ii)] If $N_C(x) = \{o_x\}$, then $x \in \inte(\overline{C})$. 
\end{enumerate}
\end{proposition}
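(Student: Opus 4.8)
The plan for (i) is to exclude nonzero normals directly. Clearly $o_x \in N_C(x)$, so suppose $([\gamma],r) \in N_C(x)$ with $r > 0$; by Remark \ref{rmk-normal-cone}(i) we may take $r = 1$, and then Remark \ref{rmk-normal-cone}(v) forces $\angle(\gamma,\gamma^y) \ge \pi/2$ for every $y \in C$ with $y \ne x$. But $x \in \inte(C)$, so $\gamma(s) \in C$ for all sufficiently small $s > 0$; choosing $y = \gamma(s) \ne x$, the geodesic $\gamma^y$ coincides with $\gamma|_{[0,s]}$, so $\angle(\gamma,\gamma^y) = 0$, a contradiction. Hence $N_C(x) = \{o_x\}$. (Alternatively, from the definition: Remark \ref{rmk-def-proximal-normal-cone}(i) would give $x = P_C(\gamma(s))$ for small $s$, whereas $\gamma(s) \in C$ forces $P_C(\gamma(s)) = \{\gamma(s)\} \ne \{x\}$.)

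For (ii) I would argue by contraposition, constructing a nonzero normal at $x$ under the assumption $x \notin \inte(\overline{C})$. First localize: by Remark \ref{rmk-normal-cone}(iv), replace $C$ by $C \cap V$, where $V = B(x,\rho')$ is a convex ball with $\rho'$ small enough that $\overline{B}(x,\rho')$ lies in a compact neighborhood of $x$ and $2\rho' < D_\kappa$. This leaves $N_C(x)$ unchanged, gives $C$ small diameter, and --- since $C \cap V \subseteq C$ implies $\inte(\overline{C\cap V}) \subseteq \inte(\overline C)$ --- keeps $x$ off the interior of the closure. Now $\overline{C}$ is convex by Remark \ref{rmk-normal-cone}(ii), compact (a closed subset of $\overline{B}(x,\rho')$), hence complete in the induced metric, and $N_C(x) = N_{\overline{C}}(x)$. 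Since $x \in \overline{C} \setminus \inte(\overline{C}) = \bd(\overline{C})$, pick $z_n \in X \setminus \overline{C}$ with $z_n \to x$ and set $p_n = P_{\overline{C}}(z_n)$; for $n$ large this is a well-defined point, with $p_n \ne z_n$ and $p_n \to x$ because $d(p_n,z_n) = \dist(z_n,\overline{C}) \le d(z_n,x) \to 0$.

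The core of the proof is a limiting argument, packaged by Remark \ref{rmk-normal-cone}(vi). For $n$ large, $p_n,z_n \in B(x,R_x)$ and $d(p_n,z_n) < R_x$, so the geodesic from $p_n$ to $z_n$ extends, via the geodesic extension property around $x$, to a geodesic $\eta_n \colon [0,R_x] \to X$ issuing at $p_n$. Since $p_n = P_{\overline{C}}(z_n) = P_{\overline{C}}(\eta_n(d(p_n,z_n)))$, we get $([\eta_n],1) \in N_{\overline{C}}(p_n)$. Fix $l \in (0,R_x)$ small enough that $\overline{B}(x,2l)$ lies in the compact neighborhood; then for $n$ large the points $\eta_n(l)$ all lie in this compact set, so along a subsequence $\eta_n(l) \to q$ with $d(x,q) = l > 0$. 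By continuous dependence of geodesics on their endpoints, $\eta_n|_{[0,l]} \to \gamma$ pointwise, where $\gamma$ is the nonconstant geodesic from $x$ to $q$. Applying Remark \ref{rmk-normal-cone}(vi) with the set $\overline{C}$, base points $p_n \to x$, and geodesics $\eta_n|_{[0,l]}$ (whose equivalence classes agree with those of $\eta_n$) gives $([\gamma],1) \in N_{\overline{C}}(x) = N_C(x)$, a nonzero element, contradicting $N_C(x) = \{o_x\}$.

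I expect the main obstacle to be precisely this limiting step in (ii): the approximating normals $([\eta_n],1)$ live in the distinct tangent cones $T_{p_n}X$, so before Remark \ref{rmk-normal-cone}(vi) can be invoked one must manufacture geodesics of a common length issuing from the moving base points $p_n$ (this is where the geodesic extension property around $x$ is essential) and then extract a convergent subsequence of their endpoints (this is where local compactness enters). The preliminary localization also has to be arranged so that $\overline{C}$ simultaneously becomes compact and convex while $x$ remains on its boundary.
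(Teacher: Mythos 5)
Your proof is correct and follows essentially the same route as the paper's: part (i) by producing a point of $C$ along $\gamma$ violating the projection/angle condition, and part (ii) by localizing to a compact convex set, approximating $x$ from outside $\overline{C}$, projecting, extending the resulting geodesics to a common length via the geodesic extension property, extracting a convergent subsequence by local compactness, and passing to the limit with Remark \ref{rmk-normal-cone}(vi). The only differences are cosmetic (the paper intersects with a closed ball and takes limits of the extended endpoints directly, rather than of $\eta_n(l)$ for a fixed small $l$).
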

\begin{proof}
(i) Let  $x \in \inte(C)$. If $([\gamma],1) \in N_C(x)$, then $x = P_C(\gamma(t))$ for some $t > 0$, which contradicts the fact that $x \in \inte(C)$.

(ii) Let $R \in (0,R_x]$ such that $\overline{B}(x,R)$ is compact, and take the set $D = C \cap \overline{B}(x,R)$. As $\{o_x\} = N_C(x) = N_D(x) = N_{\overline{D}}(x)$, we can assume that $D$ is closed, and hence compact (otherwise, consider $\overline{D}$ instead of $D$). Suppose that $x \notin \inte(\overline{C})$. Then $x \in D \setminus \inte(D)$. 

Take a sequence $(x_n) \subseteq X \setminus D$ with $x_n \to x$ and $d(x_n, x) \le R/2$ for all $n \in \N$. For $n \in \N$, take $p_n = P_{D}(x_n)$. Observe that $d(x, p_n) \le d(x,x_n) + d(x_n,p_n) \le 2d(x,x_n)$ for all $n \in \N$, and hence $p_n \to x$.

Extend the geodesic from $p_n$ to $x_n$ beyond $x_n$ to a point $x'_n$ so that $d(p_n,x'_n) = R/2$. Because $(x'_n) \subseteq \overline{B}(x,R)$, the sequence $(x_n')$ has a convergent subsequence whose limit we denote by $x'$. For $n \in \N$, denote by $\gamma_n$ the geodesic from $p_n$ to $x'_n$ and let $\gamma$ be the geodesic from $x$ to $x'$. Since $([\gamma_n],1) \in N_{D}(p_n)$ for all $n \in \N$, applying Remark \ref{rmk-normal-cone}(vi), we get $([\gamma],1) \in  N_D(x)$, a contradiction.
\end{proof}

The above result can be interpreted as an analogue of the supporting hyperplane theorem from finite-dimensional Hilbert spaces. Indeed, if $C$ is convex and closed and $x \in \bd(C)$, there exists $([\gamma],1) \in N_C(x)$, and hence, by Remark \ref{rmk-normal-cone}(v), $\langle ([\gamma],1),([\gamma^y],d(x,y)) \rangle \le 0$ for all $y \in C$.

We finish this section with the following straightforward examples.

\begin{example} \label{ex-normal-cone}
\item[(i)] $N_{\{x\}}(x) = T_x X$.
\item[(ii)] If $\gamma$ and $\eta$ are geodesics issuing at $x$, then $([\gamma],1) \in N_{\Img \eta}(x)$ if and only if $([\eta],1) \in N_{\Img \gamma}(x)$.
\item[(iii)] Let $z \in X$ and $r \in (0, D_\kappa/2)$. Suppose that $x \in \bd(\overline{B}(z, r))$, and denote by $\Gamma$ the set of all nonconstant geodesics $\gamma : [0,l] \to X$ issuing at $x$ with the property that $d(z,x) + l = d(z,\gamma(l))$. Then $([\gamma],1) \in N_{\overline{B}(z, r)}(x)$ if and only if $\gamma \in \Gamma$.
\item[(iv)] Given $x \in X$, in the geodesic space $X \times \R$ we have $([\sigma],1) \in N_{X \times \R_+}(x,0)$ if and only if $\sigma : [0,l] \to X \times \R$ is defined by $\sigma(t) = (x,-t)$ for all $t \in [0,l]$.
\end{example}
\begin{proof}
(iii) If $([\gamma],1) \in N_{\overline{B}(z, r)}(x)$, then $x = P_{\overline{B}(z, r)}(\gamma(R_x))$. Let $y \in [z, \gamma(R_x)]$ with $d(z,y) = r$. Then $y \in \overline{B}(z, r)$ and $d(\gamma(R_x),y) \ge d(\gamma(R_x),x)$, so
\[d(z,\gamma(R_x)) \le d(z,x) + d(x,\gamma(R_x)) \le d(z,y) + d(y,\gamma(R_x)) = d(z,\gamma(R_x)).\]
We conclude that $\gamma \in \Gamma$ and, by uniqueness of geodesics, $x=y$.

Conversely, if $\gamma \in \Gamma$, then for all $y \in \overline{B}(z, r)$,
\[d(z,x) + d(x,\gamma(R_x)) = d(z,\gamma(R_x)) \le d(z,y) + d(y,\gamma(R_x)),\]
from where $d(\gamma(R_x),x) \le d(\gamma(R_x),y)$. Thus, $x = P_{\overline{B}(z, r)}(\gamma(R_x))$, so $([\gamma],1) \in N_{\overline{B}(z, r)}(x)$.

(iv) Let $\sigma : [0,l] \to X \times \R$ be a nonconstant geodesic issuing at $(x,0)$ that is written as $\sigma(t) = \left(\gamma(rt/l), \alpha(st/l)\right)$ for all $t \in [0,l]$, where $l^2 = r^2 + s^2$, and $\gamma : [0,r] \to X$ and $\alpha : [0,s] \to \R$ are geodesics. Note that $\alpha(t) = t$ for all $t \in [0,s]$ or $\alpha(t) = -t$ for all $t \in [0,s]$.

If $([\sigma],1) \in N_{X \times \R_+}(x,0)$, then $P_{X \times \R_+}(\sigma(t)) = (x,0)$ for some $t \in (0,l]$. Thus,
\[d_2(\sigma(t),(x,0)) \le d_2(\sigma(t),(x,\lambda)),\]
so $|\alpha(st/l)| \le |\alpha(st/l) - \lambda|$ for all $\lambda \ge 0$ and we get that $\alpha(t) = -t$ for all $t \in [0,s]$. Moreover,
\[d_2(\sigma(t),(x,0)) \le d_2(\sigma(t),(\gamma(rt/l),0)),\]
from where $\gamma(rt/l) = x$ and $r=0$.

Conversely, if $\sigma : [0,l] \to X \times \R$ is defined by $\sigma(t) = (x,-t)$ for all $t \in [0,l]$, then 
\[d_2(\sigma(t),(x,0))^2 = t^2 \le d(x,y)^2 + (t+\lambda)^2 = d_2(\sigma(t),(y,\lambda))^2,\]
for all $t \in [0,l]$ and all $(y,\lambda) \in X \times \R_+$. This shows that $([\sigma],1) \in N_{X \times \R_+}(x,0)$.
\end{proof}

\section{The subdifferential via normal cones}\label{sect-subdiff}

One possibility to introduce the subdifferential of a function at a point is to view the subgradients as ``slopes'' of continuous affine minorants that coincide with the function at that point. Another perhaps more geometric approach is to use the normal cone to the epigraph of the function. As pointed out previously, we consider the second way more suitable for the nonlinear setting due to the important role of the metric projection in this approach. 

\begin{definition}\label{def-subdiff}
Let $f : X \to (-\infty,\infty]$ be convex, and let $x \in \dom f$. We say that $([\gamma],r) \in T_x X$ is a {\it subgradient} of $f$ at $x$ if $([\sigma],1) \in N_{\epi f}(x,f(x))$, where $\sigma : [0,R_x] \to X \times \R$ is defined by 
\[\sigma(t) = \left(\gamma\left(rt/\sqrt{r^2+1}\right), f(x) - t/\sqrt{r^2+1}\right)\quad \text{for all }t \in [0,R_x].\]

The set of all subgradients of $f$ at $x$ forms the {\it subdifferential} of $f$ at $x$, denoted by $\partial f(x)$.
\end{definition}

Regarding the previous definition, we observe the following.

\begin{remark}\label{rmk-def-subdiff} 
\begin{enumerate}
\item[(i)] $\partial f(x)$ is well-defined since, by Lemma \ref{lemma-add-scalar-prod}(ii), we have that $\widetilde{\sigma} \in [\sigma]$ if and only if there exists $\widetilde{\gamma} \in [\gamma]$ such that 
\[\widetilde{\sigma}(t) = \left(\widetilde{\gamma}\left(rt/\sqrt{r^2+1}\right), f(x) - t/\sqrt{r^2+1}\right)\quad \text{for all }t \in [0,R_x].\]
\item[(ii)] If $X$ is a $\CAT(0)$ space with the geodesic extension property, then one can assume that $\gamma$ is actually a geodesic ray, and so $\sigma$ can be defined on any interval $[0,l] \subseteq [0,\infty)$.
\end{enumerate}
\end{remark}

A key point in understanding the behavior of the subdifferential of convex functions is the fact that such functions that are continuous at a point are locally Lipschitz there.

\begin{lemma}\label{lemma-bd-Lip}
Let $(X,d)$ be a  uniquely geodesic space, let $f : X \to (-\infty, \infty]$ be convex, and let $x \in \dom f$. Suppose that $X$ has the geodesic extension property around $x$. Then the following are equivalent:
\begin{enumerate}
\item[(i)] $f$ is continuous at $x$.
\item[(ii)] $f$ is bounded on a neighborhood of $x$.
\item[(iii)] $f$ is Lipschitz on a neighborhood of $x$.
\end{enumerate}
\end{lemma}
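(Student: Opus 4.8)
The plan is to prove the cyclic chain of implications $(iii)\Rightarrow(ii)\Rightarrow(i)\Rightarrow(iii)$, where the only non-trivial step is the last one. The implication $(iii)\Rightarrow(ii)$ is immediate: a Lipschitz function on a ball $B(x,r)$ is bounded above by $f(x)+Lr$ and below by $f(x)-Lr$ on that ball. The implication $(ii)\Rightarrow(i)$ is the standard convexity argument, which I would carry out as follows: suppose $|f|\le M$ on $B(x,2r)$ (using the geodesic extension property to guarantee that points near $x$ can be pushed a bit past $x$). Given $y\in B(x,r)$, let $\gamma$ be the geodesic from $x$ through $y$, extended to a point $z$ with $x\in[y,z]$ and $d(x,z)=r$ (here we use geodesic extension around $x$ with constant $R_x$, shrinking $r$ if necessary so $r<R_x$). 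Writing $x$ as a convex combination of $y$ and $z$, convexity gives $f(x)\le \tfrac{d(x,z)}{d(y,z)}f(y)+\tfrac{d(x,y)}{d(y,z)}f(z)$, which rearranges to a lower bound $f(y)\ge f(x)-\tfrac{d(x,y)}{r}(M+|f(x)|)$ and hence, combined with convexity along $[x,y]$ applied to $f(y)\le(1-t)f(x)+tf(w)$ for a suitable $w$ on the extension of $[x,y]$ past $y$, an upper bound of the same linear-in-$d(x,y)$ type; letting $y\to x$ yields continuity.

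The main work is $(i)\Rightarrow(iii)$. Here I would first use $(i)\Rightarrow(ii)$-type reasoning: continuity at $x$ means $f$ is bounded on some ball $\overline{B}(x,2r)$, say by $M$ (after shrinking, assume $2r<R_x$). Now fix distinct $y,z\in B(x,r/2)$, and let $\gamma:[0,d(y,z)]\to X$ be the geodesic from $y$ to $z$; extend it beyond $z$ to a point $w$ with $d(z,w)=r$, using the geodesic extension property around $x$ — note $y,z\in B(x,R_x)$ with $d(y,z)<R_x$, so such an extension of length $r$ is available, and $d(x,w)\le d(x,z)+d(z,w)<r/2+r<2r$, so $f(w)\le M$. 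Then $z=(1-t)y+tw$ with $t=d(y,z)/(d(y,z)+r)$, and convexity gives
\[
f(z)-f(y)\le t\big(f(w)-f(y)\big)\le \frac{d(y,z)}{d(y,z)+r}\,\big(M+M\big)\le \frac{2M}{r}\,d(y,z).
\]
By symmetry (swapping the roles of $y$ and $z$), $f(y)-f(z)\le \tfrac{2M}{r}d(y,z)$, so $|f(y)-f(z)|\le (2M/r)\,d(y,z)$ for all $y,z\in B(x,r/2)$, which is the desired Lipschitz bound.

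The step I expect to be the main obstacle — or at least the one requiring care rather than difficulty — is the repeated invocation of the geodesic extension property: one must check that the points being extended to (the $z$, $w$, $x'$ of the arguments above) lie within the radius $R_x$ around $x$ so that the extension of the required length genuinely exists, and that the extended endpoints stay inside the ball on which $f$ is bounded. This is purely a matter of choosing the radii in the right order ($r$ small compared to $R_x$, the boundedness ball of radius $2r$, the working ball of radius $r/2$), but it is the place where the hypothesis ``$X$ has the geodesic extension property around $x$'' is actually used and where a sloppy argument would break down. A secondary point worth noting is that unique geodesics are needed so that the notation $(1-t)y+tw$ and the convexity inequality along a single well-defined geodesic make sense; this is exactly the standing hypothesis of the lemma.
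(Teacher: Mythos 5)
Your proof is correct and the essential step --- extending the geodesic from $y$ through $z$ to a point $w$ at distance $r$, writing $z=(1-t)y+tw$ with $t=d(y,z)/(d(y,z)+r)$, and applying convexity to get the linear bound $f(z)-f(y)\le (2M/r)d(y,z)$ --- is exactly the paper's argument. The paper merely organizes the cycle in the other direction (treating (iii)$\Rightarrow$(i) and (i)$\Rightarrow$(ii) as the obvious legs and proving only (ii)$\Rightarrow$(iii)), which makes your separate (ii)$\Rightarrow$(i) argument unnecessary, but this is purely a matter of bookkeeping.
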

\begin{proof}
We only prove that (ii) implies (iii) since the other implications are obvious. Suppose there exists $r > 0$ such that for all $y \in B(x,r)$, $|f(y)| \le M$ for some $M \ge 0$. We show that $f$ is Lipschitz on $B(x, R)$, where $R = \min\{r/2,R_x/4\}$. To this end, let $y,z \in B(x, R)$ be distinct. Then $d(y,z) < R_x/2$ and we can choose $w \in X$ with $z = (1-t)y + tw$, where $t=d(y,z)/(d(y,z) + R)$. Note that $d(w,z) = R$. Also, $d(x,w) \le d(x,z) + d(z,w) < r$, so $w \in B(x,r)$. Because $f$ is convex, $f(z) \le (1-t)f(y) + tf(w)$, and hence
\[f(z) - f(y) \le t(f(w)-f(y)) \le 2tM = 2M d(y,z) \frac{1}{d(y,z) + R} \le \frac{2M}{R}d(y,z).\]
By swapping the roles of $x$ and $y$ in the above argument, we finally get $|f(y)-f(z)| \le (2M/R)d(y,z)$.
\end{proof}

We characterize below the subdifferential of a convex function by means of a variational inequality. This fact will be essential to prove the first order condition for  minimizers of convex functions. We return to the setting of a $\CAT(\kappa)$ space (with the additional conditions assumed in Section \ref{sect-tangent-sp}).

\begin{proposition} \label{prop-subdiff-convex}
Let $f : X \to (-\infty,\infty]$ be convex, let $x \in \dom f$, and let $([\gamma],r) \in T_x X$. For $y \in X$, denote by $\gamma^y$ the geodesic from $x$ to $y$. Then $([\gamma],r) \in \partial f(x)$ if and only if 
\begin{equation}\label{prop-subdiff-convex-eq1}
\langle ([\gamma],r), ([\gamma^y],d(x,y))\rangle  + f(x) \le f(y) \quad\text{for all } y \in X.
\end{equation}
\end{proposition}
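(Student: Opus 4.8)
The plan is to unwind Definition~\ref{def-subdiff} using the variational description of the normal cone in $X \times \R$ supplied by Remark~\ref{rmk-normal-cone}(v), and then to compute the relevant scalar product by separating the $X$-component from the $\R$-component via Lemma~\ref{lemma-add-scalar-prod}(i). The whole argument is then a bookkeeping exercise with no deeper geometric input.

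First I would record that $\epi f$ is a nonempty convex subset of $X\times\R$, that $(x,f(x))\in\epi f$ since $x\in\dom f$, and that under our standing assumptions $X\times\R$ is again a locally compact, uniquely geodesic $\CAT(\kappa)$ space having the geodesic extension property around $(x,f(x))$ with the same constant $R_x$; hence all the machinery of Sections~\ref{sect-tangent-sp} and~\ref{sect-normal-cone} applies to it. In particular, Remark~\ref{rmk-normal-cone}(v) applied to $\epi f$ at $(x,f(x))$ says that, writing $\tau^{(y,\mu)}$ for the geodesic in $X\times\R$ from $(x,f(x))$ to $(y,\mu)$, a tangent vector lies in $N_{\epi f}(x,f(x))$ precisely when its scalar product with $([\tau^{(y,\mu)}],d_2((x,f(x)),(y,\mu)))$ is $\le 0$ for every $(y,\mu)\in\epi f$. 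A short computation gives $d_2((x,f(x)),\sigma(R_x)) = R_x$, so $\sigma$ is a geodesic of length $R_x$; combining this with the scale-invariance of the normal cone (Remark~\ref{rmk-normal-cone}(i)) and Definition~\ref{def-subdiff}, I obtain that $([\gamma],r)\in\partial f(x)$ if and only if
\[\langle ([\sigma],R_x),\,([\tau^{(y,\mu)}],d_2((x,f(x)),(y,\mu)))\rangle \le 0 \quad\text{for all } (y,\mu)\in\epi f.\]

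Next I would evaluate this scalar product with Lemma~\ref{lemma-add-scalar-prod}(i). Write $\sigma(t) = (\gamma_1(r_1 t/R_x),\alpha_1(s_1 t/R_x))$ with $r_1 = rR_x/\sqrt{r^2+1}$, $s_1 = R_x/\sqrt{r^2+1}$, $\gamma_1$ the initial arc of $\gamma$, and $\alpha_1(u) = f(x)-u$; and write $\tau^{(y,\mu)}$ accordingly with $X$-component $\gamma^y$, $\R$-component the geodesic in $\R$ from $f(x)$ to $\mu$, $r_2 = d(x,y)$ and $s_2 = |\mu-f(x)|$. Lemma~\ref{lemma-add-scalar-prod}(i) splits the scalar product into its $X$-part and its $\R$-part. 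Since $[\gamma_1]=[\gamma]$, the $X$-part is $r_1 d(x,y)\cos\angle(\gamma,\gamma^y) = \frac{R_x}{\sqrt{r^2+1}}\langle([\gamma],r),([\gamma^y],d(x,y))\rangle$. For the $\R$-part, $\alpha_1$ is decreasing, so whether $\mu>f(x)$ (opposite directions, angle $\pi$), $\mu<f(x)$ (same direction, angle $0$), or $\mu=f(x)$, the contribution comes out uniformly as $-s_1(\mu-f(x)) = -\frac{R_x}{\sqrt{r^2+1}}(\mu-f(x))$; the degenerate cases $r=0$, $y=x$, and $(y,\mu)=(x,f(x))$ are checked directly. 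Hence
\[\langle ([\sigma],R_x),\,([\tau^{(y,\mu)}],d_2((x,f(x)),(y,\mu)))\rangle = \frac{R_x}{\sqrt{r^2+1}}\Big(\langle([\gamma],r),([\gamma^y],d(x,y))\rangle - \mu + f(x)\Big).\]

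Finally, since $R_x/\sqrt{r^2+1}>0$, the condition from the first step is equivalent to $\langle([\gamma],r),([\gamma^y],d(x,y))\rangle + f(x) \le \mu$ for all $(y,\mu)\in\epi f$; taking, for each $y\in\dom f$, the smallest admissible value $\mu=f(y)$ (and noting the inequality is vacuous when $y\notin\dom f$) shows this is in turn equivalent to \eqref{prop-subdiff-convex-eq1}. The main point requiring care is the scalar-product decomposition --- in particular, verifying that the $\R$-component contributes $-s_1(\mu-f(x))$ regardless of the sign of $\mu-f(x)$, and keeping the degenerate cases in mind --- but beyond Lemma~\ref{lemma-add-scalar-prod} and Remark~\ref{rmk-normal-cone}(v) there is no real obstacle. (The well-definedness of the object $[\sigma]$ attached to $([\gamma],r)$ is already handled by Remark~\ref{rmk-def-subdiff}(i).)
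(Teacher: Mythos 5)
Your proposal is correct and follows essentially the same route as the paper: unwind Definition~\ref{def-subdiff} via Remark~\ref{rmk-normal-cone}(v) applied to $\epi f$, split the scalar product with Lemma~\ref{lemma-add-scalar-prod}(i), and observe that ranging over $(y,\mu)\in\epi f$ reduces to taking $\mu=f(y)$. The only cosmetic difference is that the paper parametrizes epigraph points as $(y,f(y)+a)$ with $a\ge 0$ rather than $(y,\mu)$ with $\mu\ge f(y)$, and you are somewhat more explicit about the sign analysis of the $\R$-component.
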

\begin{proof}
Let $\sigma : [0,R_x] \to X \times \R$ be defined by 
\[\sigma(t) = \left(\gamma\left(rt/\sqrt{r^2+1}\right), f(x) - t/\sqrt{r^2+1}\right)\quad \text{for all }t \in [0,R_x].\]
Taking $u = rR_x/\sqrt{r^2+1}$ and $v=R_x/\sqrt{r^2+1}$, we have $u^2 + v^2 = R_x^2$ and
\[\sigma(t) = \left(\gamma\left(ut/R_x\right), f(x) - vt/R_x\right)\quad \text{for all }t \in [0,R_x].\]
By Remark \ref{rmk-normal-cone}(v), $([\sigma],1) \in N_{\epi f}(x,f(x))$ if and only if for all $y \in \dom f$ and $a \ge 0$,
\begin{equation}\label{prop-subdiff-convex-eq2}
\langle ([\sigma],R_x), ([\sigma^{y,a}],\delta) \rangle \le 0,
\end{equation} 
where $\delta = d_2((x,f(x)),(y,f(y)+a))$ and $\sigma^{y,a} : [0,\delta] \to \epi f$ is the geodesic from $(x,f(x))$ to $(y,f(y)+a)$.

Denoting $\alpha : [0,v] \to \R$, $\alpha(t) = f(x) - t$, $\gamma^y : [0,d(x,y)] \to X$ the geodesic from $x$ to $y$, and $\beta : [0,|f(y) + a - f(x)|] \to \R$ the geodesic from $f(x)$ to $f(y) + a$, using Lemma \ref{lemma-add-scalar-prod}, inequality \eqref{prop-subdiff-convex-eq2} amounts to
\[\langle ([\gamma],u), ([\gamma^{y}],d(x,y)) \rangle + \langle ([\alpha],v), ([\beta],|f(y) + a - f(x)|) \rangle \le 0,\]
or, equivalently,
\begin{equation}\label{prop-subdiff-convex-eq3}
\langle ([\gamma],r), ([\gamma^{y}],d(x,y))\rangle + f(x)  \le f(y) + a.
\end{equation}

Now, if $([\gamma],r) \in \partial f(x)$, then taking $a=0$ in \eqref{prop-subdiff-convex-eq3}, we get \eqref{prop-subdiff-convex-eq1}. Conversely, if \eqref{prop-subdiff-convex-eq1} holds, then \eqref{prop-subdiff-convex-eq3} holds for all $a \ge 0$, and hence $([\gamma],r) \in \partial f(x)$.
\end{proof}

\begin{corollary}
Let $f : X \to (-\infty,\infty]$ be convex and proper. Then 
\[\argmin f = \{x \in X \mid o_x \in \partial f(x)\}.\]
\end{corollary}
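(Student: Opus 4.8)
The plan is to apply Proposition \ref{prop-subdiff-convex} directly, using the fact that $o_x$ is represented in the scalar product as the zero element of $T_x X$. The statement to prove is the set equality $\argmin f = \{x \in X \mid o_x \in \partial f(x)\}$; note that since $f$ is proper, $\dom f \ne \emptyset$, and a minimizer of $f$ necessarily lies in $\dom f$, so both sides consist of points in $\dom f$, which is exactly the domain on which Proposition \ref{prop-subdiff-convex} operates.

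First I would observe that for any $x \in \dom f$ and any geodesic $\gamma \in \Theta_x X$, the element $o_x = ([\gamma],0) \in T_x X$ satisfies $\langle o_x, w \rangle = 0$ for every $w \in T_x X$, directly from the definition of the scalar product $\langle ([\gamma],r),([\eta],s)\rangle = rs\cos\angle(\gamma,\eta)$ with $r = 0$. In particular, for every $y \in X$, writing $\gamma^y$ for the geodesic from $x$ to $y$, we have $\langle o_x, ([\gamma^y], d(x,y))\rangle = 0$. Therefore the variational inequality \eqref{prop-subdiff-convex-eq1} characterizing membership of $o_x$ in $\partial f(x)$ becomes simply $f(x) \le f(y)$ for all $y \in X$.

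The two inclusions then follow at once. If $x \in \argmin f$, then $x \in \dom f$ (as $f$ is proper) and $f(x) \le f(y)$ for all $y \in X$, so by Proposition \ref{prop-subdiff-convex} we get $o_x \in \partial f(x)$. Conversely, if $o_x \in \partial f(x)$, then in particular $x \in \dom f$ and, by the same proposition, $f(x) + \langle o_x, ([\gamma^y],d(x,y))\rangle \le f(y)$ for all $y \in X$; since the scalar product term vanishes, $f(x) \le f(y)$ for all $y$, i.e.\ $x \in \argmin f$.

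There is essentially no obstacle here: the corollary is a routine specialization of Proposition \ref{prop-subdiff-convex} to the zero tangent vector, and the only point requiring a moment's care is the bookkeeping that $o_x$ is well-defined independently of the representing geodesic (already noted in Section \ref{sect-tangent-sp}) and that its scalar product against anything is zero. One might add a remark that this recovers the familiar Fermat rule $0 \in \partial f(x)$ from convex analysis, with $o_x$ playing the role of the zero vector.
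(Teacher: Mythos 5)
Your proposal is correct and follows the same route as the paper: both reduce the claim to Proposition \ref{prop-subdiff-convex} applied to $o_x$, whose scalar product with every tangent element vanishes, so the subgradient inequality degenerates to $f(x)\le f(y)$ for all $y$. Your version simply spells out the bookkeeping (properness, well-definedness of $o_x$) that the paper leaves implicit.
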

\begin{proof}
Let $x \in X$. Then $x \in \argmin f$ if and only if $\langle o_x, ([\gamma^y],d(x,y))\rangle  + f(x) \le f(y)$ for all $y \in X$. By Proposition \ref{prop-subdiff-convex}, this is equivalent to $o_x \in \partial f(x)$.
\end{proof}

\begin{remark}\label{rmk-notion-GigNob}
Recently, with the purpose of approaching gradient flows in $\CAT(\kappa)$ spaces from a differential viewpoint, the object minus-subdifferential was introduced in \cite[Section 3]{GigNob21}. In our context and terminology, this definition can be stated as follows:  given $f : X \to (-\infty,\infty]$ convex and $x \in \dom f$, we say that $([\eta],r) \in T_x X$ belongs to the minus-subdifferential of $f$ at $x$ if 
\begin{equation}\label{rmk-notion-GigNob-eq1}
-\langle ([\eta],r), ([\gamma^y],d(x,y))\rangle  + f(x) \le f(y) \quad\text{for all } y \in X,
\end{equation}
where $\gamma^y$ is the geodesic from $x$ to $y$.

Note that if $([\gamma],r), ([\eta],r) \in T_x X$ are opposite to each other and $w \in T_x X$, then we have $\langle ([\gamma],r), w\rangle \le -\langle ([\eta],r), w\rangle$, with no equality in general. Thus, if $([\eta],r)$ satisfies $\eqref{rmk-notion-GigNob-eq1}$, then $([\gamma],r)$ satisfies \eqref{prop-subdiff-convex-eq1}, and hence $([\gamma],r) \in \partial f(x)$. 
\end{remark}

A first step to develop a subdifferential calculus is the description of the subdifferential for functions such as the indicator function, the squared distance to a point, or the distance to a set.

\begin{example}\label{ex-indicator-fct}
Let $C \subseteq X$ be nonempty and convex. Consider the indicator function $\delta_C : X \to [0,\infty]$ defined by
\[\delta_C(x) = \begin{cases} 0, & \mbox{if } x \in C,\\ \infty, & \mbox{if } x \notin C.\\ \end{cases}\]
Then $\partial \delta_C(x) = N_C(x)$ for all $x \in C$.
\end{example}
\begin{proof}
Let $x \in C$. Since $\delta_C$ is a convex function, by Proposition \ref{prop-subdiff-convex}, $([\gamma],r) \in \partial \delta_C(x)$ if and only if $\langle ([\gamma],r), ([\gamma^y],d(x,y))\rangle \le \delta_C(y) \text{ for all } y \in X$, which is equivalent to $\langle ([\gamma],r), ([\gamma^y],d(x,y))\rangle \le 0 \text{ for all } y \in C$, that is, $([\gamma],r) \in N_C(x)$ according to Remark \ref{rmk-normal-cone}(v).
\end{proof}

\begin{example} \label{ex-squared-dist}
Suppose additionally that $X$ has diameter smaller than $D_\kappa/2$. Fix $z \in X$. If $x \in X$ with $x \ne z$, denote by $\Gamma_x$ the set of all nonconstant geodesics $\gamma : [0,l] \to X$ issuing at $x$ with the property that $d(z,x) + l = d(z,\gamma(l))$. By the geodesic extension property around $x$, $\Gamma_x \ne \emptyset$.  

Define $f : X \to \R$ by $f(x) = \frac{1}{2} d(x,z)^2$. Then 
\[\partial f(x) = \begin{cases} \{([\gamma],d(z,x)) \in T_x X \mid \gamma \in \Gamma_x\}, & \mbox{if } x \ne z,\\ o_z, & \mbox{if } x =z.\\ \end{cases}\]
\end{example}
\begin{proof}
Since $X$ is $2$-uniformly convex with parameter $K > 0$, the function $f$ is convex.

Let $x \in X$ and $([\gamma],r) \in \partial f(x)$. Then $([\sigma],1) \in N_{\epi f}(x,f(x))$, where $\sigma : [0, R_x] \to X \times \R$: 
\[\sigma(t) = \left(\gamma\left(rt/\sqrt{r^2+1}\right), f(x) - t/\sqrt{r^2+1}\right) \quad \text{for all }t \in [0,R_x].\]
Thus, $P_{\epi f}(\sigma(R_x)) = (x,f(x))$. If $u = R_x/\sqrt{r^2 + 1}$, then $ur < R_x$. Take $y = (1-a)z + az'$, where $a = 1/(u+1)$ and $z' = \gamma(ur)$. Then, for all $t\in (0,1)$,
\[d_2(\sigma(R_x),(x,f(x)))^2 \le d_2(\sigma(R_x),(1-t)(x,f(x)) + t(y,f(y)))^2,\]
from where
\begin{align*}
d(z',x)^2 + u^2 &\le d(z',(1-t)x + t y)^2 + \left(f(x) - u - (1-t)f(x) - t f(y)\right)^2\\
& \le (1-t)d(z',x)^2 + t d(z',y)^2 - Kt(1-t)d(x,y)^2\\
& \quad + u^2 - 2tu(f(x) - f(y)) + t^2(f(x)-f(y))^2.
\end{align*}
Thus,
\[Kt(1-t)d(x,y)^2 \le -t d(z',x)^2 + t(1-a)^2d(z',z)^2 - tu \, d(x,z)^2 + tu \, d(y,z)^2+ t^2(f(x)-f(y))^2.\]
Dividing by $t$ and then letting $t \searrow 0$, we get
\begin{align*}
Kd(x,y)^2 & \le -d(z',x)^2 + \left((1-a)^2 + ua^2\right)d(z',z)^2 - u\, d(x,z)^2\\
& \le \frac{u}{u+1}\left(d(z',x) + d(x,z)\right)^2  -d(z',x)^2 - u\, d(x,z)^2 \\
& = u\left(\frac{1}{u+1}(ur + d(x,z))^2 - ur^2 - d(x,z)^2\right) = - \frac{u^2}{u+1}(r-d(x,z))^2 \le 0.
\end{align*}
From the above inequality, we conclude that $x=y$ and $r = d(x,z)$. If $x=z$, then $r = 0$ and $([\gamma],r) = o_z$. Otherwise, $\gamma \in \Gamma_x$.

Let $x \in X$ with $x \ne z$ and $\gamma \in \Gamma_x$. Taking an extension of $\gamma$ if needed, denote $r = d(z,x)$, $u = R_x/\sqrt{r^2 + 1}$, $z' = \gamma(ur)$, and $a = 1/(u+1)$. Then 
\[x = (1-a)z+az', \quad d(x,z') = ur, \quad \text{and} \quad d(z,z') = (u+1)r.\] 
We prove that $([\gamma],r) \in \partial f(x)$. More precisely, we show that $([\sigma],1) \in N_{\epi f}(x,f(x))$, where $\sigma : [0,R_x] \to X \times \R$:
\[\sigma(t) = \left(\gamma\left(rt/\sqrt{r^2+1}\right),f(x)-t/\sqrt{r^2+1}\right) \quad \text{for all }t \in [0,R_x].\]
Let $(y,\lambda) \in \epi f$. Then
\begin{align*}
d_2(\sigma(R_x),(y,\lambda))^2 & = d(z',y)^2 + (f(x) - u - \lambda)^2\\
&  = u^2 + d(z',y)^2 - u\, d(z,x)^2 + 2u\lambda + (f(x) - \lambda)^2\\
& \ge u^2 +  d(z',y)^2 - ur^2 +u \, d(z,y)^2.
\end{align*}
Since $d(z',y) \ge |d(z',z) - d(z,y)| = |(u+1)r - d(z,y)|$, it follows that
\[d(z',y)^2 - ur^2 +u \, d(z,y)^2 \ge u^2r^2 + (u+1)(r-d(z,y))^2 \ge u^2r^2,\]
from where 
\[d_2(\sigma(R_x),(y,\lambda))^2 \ge u^2 + u^2r^2 = d(z',x)^2 + u^2 = d_2(\sigma(R_x),(x,f(x)))^2.\]
Hence, $P_{\epi f}(\sigma(R_x)) = (x,f(x))$, so $([\sigma],1) \in N_{\epi f}(x,f(x))$.

Finally, we prove that $o_z \in \partial f(z)$, which amounts to showing that $([\sigma],1) \in N_{\epi f}(z,0)$, where $\sigma : [0,R_x] \to X \times \R$, $\sigma(t) = (z,-t)$ for all $t \in [0,R_x]$. Since $\epi f \subseteq X \times \R_+$, this follows by Example \ref{ex-normal-cone}(iv) and Remark \ref{rmk-normal-cone}(iii).
\end{proof}

In the above example, the condition that the diameter of $X$ is smaller than $D_\kappa/2$ is used to apply $2$-uniform convexity. Thus, when $X$ is a $\CAT(0)$ space, its diameter can be unbounded. Moreover, in $\CAT(0)$ spaces, the distance function to a convex set is convex and we also have the following example.

\begin{example}\label{ex-dist-set}
Consider $(X,d)$ a complete, locally compact $\CAT(0)$ space with the geodesic extension property. Let $C \subseteq X$ be nonempty, convex, and closed. If $x \in X \setminus C$, denote by $\Gamma_x$ the set of all geodesics $\gamma : [0,1] \to X$ issuing at $x$ with the property that $d(P_C(x),x) + 1 = d(P_C(x),\gamma(1))$. By the geodesic extension property, $\Gamma_x \ne \emptyset$. 

Define $f : X \to \R$ by $f(x) = \dist(x,C)$. Then 
\[\partial f(x) = \begin{cases} \{([\gamma],1) \in T_x X\mid \gamma \in \Gamma_x\}, & \mbox{if } x \notin C,\\ \{([\gamma],r) \in N_C(x) \mid  r \in [0,1]\}, & \mbox{if } x \in \bd(C), \\ o_x, & \mbox{if } x \in \inte(C).\\ \end{cases}\]
\end{example}

\begin{proof}
Let $x \in X \setminus C$. Denote $z = P_C(x)$ and 
\[a = \frac{d(x,z)}{d(x,z) + 1} \in (0,1).\] 
Clearly, $d(x,z) = a/(1-a)$.

If $([\gamma],r) \in \partial f(x)$, then $([\sigma],1) \in N_{\epi f}(x,f(x))$, where $\sigma : [0, \sqrt{r^2+1}] \to X \times \R$: 
\[\sigma(t) = \left(\gamma\left(rt/\sqrt{r^2+1}\right), f(x) - t/\sqrt{r^2+1}\right) \quad \text{for all }t \in [0,\sqrt{r^2+1}].\]
Thus, $P_{\epi f}(\sigma(\sqrt{r^2+1})) = (x,f(x))$. Take $y = (1-a)z + a\gamma(r)$. Then, for all $t \in (0,1)$, 
\[d_2(\sigma(\sqrt{r^2+1}),(x,f(x)))^2 \le d_2(\sigma(\sqrt{r^2+1}),(1-t)(x,f(x)) + t(y,f(y)))^2.\]
As in the proof of Example \ref{ex-squared-dist}, we get that
\begin{align*}
d(x,y)^2 & \le (1-a)^2d(\gamma(r),z)^2 - d(\gamma(r),x)^2 - 2(f(x)-f(y))\\
& \le (1-a)^2d(\gamma(r),z)^2 - d(\gamma(r),x)^2 - 2d(x,z) + 2d(y,z) \\
& =  (1-a)^2d(\gamma(r),z)^2 - d(\gamma(r),x)^2- 2d(x,z) + 2ad(\gamma(r),z)\\
& \le  (1-a)^2(d(\gamma(r),x) + d(x,z))^2 - d(\gamma(r),x)^2- 2d(x,z) + 2a (d(\gamma(r),x) + d(x,z))\\
& = (1-a)^2\left(r + \frac{a}{1-a}\right)^2 - r^2- 2\frac{a}{1-a} + 2a\left(r + \frac{a}{1-a}\right) \\ 
& = -a(2-a)(r - 1)^2 \le 0,
\end{align*}
from where $x=y$ and $r=1$. This shows that $\gamma \in \Gamma_x$.

Now let $\gamma \in \Gamma_x$. We prove next that $([\gamma],1) \in \partial f(x)$. More precisely, we show that $([\sigma],1) \in N_{\epi f}(x,f(x))$, where $\sigma : [0,\sqrt{2}] \to X \times \R$:
\[\sigma(t) = \left(\gamma\left(t/\sqrt{2}\right),f(x)-t/\sqrt{2}\right)  \quad \text{for all }t \in [0,\sqrt{2}].\]
To this end, we prove that for all $(y,\lambda) \in \epi f$,
\[2 = d_2(\sigma(\sqrt{2}),(x,f(x)))^2\le d_2(\sigma(\sqrt{2}),(y,\lambda))^2.\]
Let $(y,\lambda) \in \epi f$. Denote $D = d_2(\sigma(\sqrt{2}),(y,\lambda))^2$, $z' = \gamma(1)$, $w = P_C(y)$, and $v = (1-a)w + az'$. Then $x = (1-a)z + az'$, $d(z,z') = 1/(1-a)$, $\lambda \ge d(y,w)$, and
\begin{align*}
d(y,v)^2 &\le (1-a)d(y,w)^2 + ad(y,z')^2 - a(1-a)d(w,z')^2\\
& \le (1-a)d(y,w)^2 + ad(y,z')^2 - a(1-a)d(z,z')^2\\ 
&= (1-a)d(y,w)^2 + ad(y,z')^2 - \frac{a}{1-a}.
\end{align*}
Therefore, 
\begin{align*}
D & =  d(z',y)^2 + (d(x,z)-1-\lambda)^2 =  d(z',y)^2 + \left(\frac{1-2a}{1-a} + \lambda\right)^2\\
& \ge \frac{1}{a} d(y,v)^2 - \frac{1-a}{a}d(y,w)^2 + \frac{1}{1-a} + \left(\frac{1-2a}{1-a}\right)^2 + 2\frac{1-2a}{1-a}\lambda + \lambda^2.
\end{align*}

Case I: $a \in (0,1/2)$. Then $1-2a > 0$ and we have
\begin{align*}
D & \ge \frac{1}{a} d(y,v)^2 - \frac{1-a}{a}d(y,w)^2 + \frac{1}{1-a} + \left(\frac{1-2a}{1-a}\right)^2 + 2\frac{1-2a}{1-a}d(y,w) + d(y,w)^2\\
& =  \frac{1}{a}\left(d(y,v)^2 - \left(d(y,w)-d(x,z)\right)^2\right) +2\left(d(y,w)-d(x,z)\right)^2 + 2.
\end{align*}
We show that $|d(y,w) - d(x,z)| \le d(y,v)$. Taking $p = (1-a)w + az \in C$, we have
\begin{align*}
d(y,w) - d(x,z) & \le d(y,p) - d(x,z) \le d(y,v) + d(v,p) - d(x,z)\\
& \le d(y,v) + ad(z',z) - d(x,z) = d(y,v).
\end{align*}
Similarly, taking $q = (1-a)z + aw \in C$, we have
\[d(x,z) - d(y,w) \le d(x,q) - d(y,w) \le ad(z',w) - d(y,w) = d(v,w) - d(y,w) \le d(v,y).\]
Thus, $D \ge 2$.

Case II: $a \in [1/2,1)$. Then $1-2a \le 0$ and we have
\begin{align*}
D & \ge \frac{1}{a} d(y,v)^2 - \frac{1-a}{a}\lambda^2 + \frac{1}{1-a} + \left(\frac{1-2a}{1-a}\right)^2 + 2\frac{1-2a}{1-a}\lambda+ \lambda^2\\
& \ge  \frac{2a-1}{a}\lambda^2 - 2\frac{2a-1}{1-a}\lambda + \frac{1}{1-a} + \left(\frac{1-2a}{1-a}\right)^2 = \frac{2a-1}{a}\left(\lambda - \frac{a}{1-a}\right)^2 + 2 \ge 2.
\end{align*}

Consequently, in both cases, $D \ge 2$, which finishes the proof that $([\gamma],1) \in \partial f(x)$.

Now let $x \in \bd(C)$. Note that $f(x) = 0$. 

Suppose that $([\gamma],r) \in \partial f(x)$ with $r>0$. Then $([\sigma],1) \in N_{\epi f}(x,0)$, where $\sigma : [0, \sqrt{r^2+1}] \to X \times \R$:
\[\sigma(t) = \left(\gamma\left(rt/\sqrt{r^2+1}\right), - t/\sqrt{r^2+1}\right) \quad \text{for all }t \in [0,\sqrt{r^2 + 1}].\]
Since $P_{\epi f}(\sigma(\sqrt{r^2+1})) = (x,0)$, it follows that for all $(y,\lambda) \in \epi f$,
\[d_2(\sigma(\sqrt{r^2+1}),(x,0))^2 \le d_2(\sigma(\sqrt{r^2+1}),(y,\lambda))^2,\]
from where 
\begin{equation} \label{ex-distance-set-eq1}
d(\gamma(r),x)^2 + 1 \le d(\gamma(r),y)^2 + (\lambda+1)^2.
\end{equation}
Thus, for all $y \in C$, $d(\gamma(r),x) \le d(\gamma(r),y)$, so $x = P_C(\gamma(r))$, which yields $([\gamma],1) \in N_C(x)$.

Let $y = (1-t)x + t\gamma(r)$, where $t \in (0,1)$. Then $f(y) = tr$, $d(\gamma(r),y) = (1-t)r$, and, applying \eqref{ex-distance-set-eq1} for $(y,f(y)) \in \epi f$ we obtain
\[r^2 + 1 \le (1-t)^2r^2 + (tr+1)^2.\] 
Thus, $r^2(1-t) - r \le 0$. Letting $t \searrow 0$, this yields $r \le 1$.

Now take $([\gamma],r) \in N_C(x)$ with $r \in [0,1]$. Then $P_C(\gamma(r))= x$. We show that $([\gamma],r) \in \partial f(x)$, i.e, $([\sigma],1) \in N_{\epi f}(x,0)$, where $\sigma : [0, \sqrt{r^2+1}] \to X \times \R$: 
\[\sigma(t) = \left(\gamma\left(rt/\sqrt{r^2+1}\right), - t/\sqrt{r^2+1}\right) \quad \text{for all }t \in [0,\sqrt{r^2 + 1}].\]
Let $(y, \lambda) \in \epi f$. If $d(\gamma(r),y) \ge 1$, then $d(\gamma(r),x) = r \le 1 \le d(\gamma(r),y)$. Otherwise, if $d(\gamma(r),y) < 1$, denoting $z = P_C(y)$, we have
\begin{align*}
d(\gamma(r),x)^2 & \le d(\gamma(r),z)^2 \le \left(d(\gamma(r),y) + d(y,z)\right)^2 \le d(\gamma(r),y)^2 + 2d(y,z) + d(y,z)^2\\
& = d(\gamma(r),y)^2 + (f(y)+1)^2 - 1 \le d(\gamma(r),y)^2 + (\lambda + 1)^2 - 1.
\end{align*}
Therefore, $d_2(\sigma(\sqrt{r^2+1}),(x,0)) \le d_2(\sigma(\sqrt{r^2+1}),(y,\lambda))$, so $P_{\epi f}(\sigma(\sqrt{r^2+1})) = (x,0)$ and $([\sigma],1) \in N_{\epi f}(x,0)$.

Finally, consider $x \in \inte(C)$. Again, $f(x) = 0$. If $([\gamma],r) \in \partial f(x)$, as before, one has that $x = P_C(\gamma(r))$. Since $x \in \inte(C)$, this implies that $r=0$. Conversely, to show that $o_x \in \partial f(x)$, one argues as in Example \ref{ex-squared-dist}.
\end{proof}

\begin{remark}
In Example \ref{ex-dist-set}, if $C = \{z\}$ for some $z \in X$, then $f : X \to \R$, $f(x) = d(x,z)$, and, for $x \ne z$, $\Gamma_x$ is the set of all geodesics $\gamma : [0,1] \to X$ issuing at $x$ with the property that $d(z,x) + 1 = d(z,\gamma(1))$. We then have
\[\partial f(x) = \begin{cases} \{([\gamma],1) \in T_x X\mid \gamma \in \Gamma_x\}, & \mbox{if } x \ne z,\\ \{([\gamma],r) \in T_x X \mid  r \in [0,1]\}, & \mbox{if } x = z.\\ \end{cases}\]

If $C = \overline{B}(z,R)$ for some $z \in X$ and $R > 0$, then $f : X \to \R$, $f(x) = \dist(x,\overline{B}(z,R))$, and, for $x \notin B(z,R)$, $\Gamma_x$ is the set of all geodesics $\gamma : [0,1] \to X$ issuing at $x$ with the property that $d(z,x) + 1 = d(z,\gamma(1))$. We then have
\[\partial f(x) = \begin{cases} \{([\gamma],1) \in T_x X\mid \gamma \in \Gamma_x\}, & \mbox{if } d(x,z) > R,\\ \{([\gamma],r) \in T_x X \mid  r \in [0,1], \gamma \in \Gamma_x\}, & \mbox{if } d(x,z) = R, \\ o_x, & \mbox{if } d(x,z) < R.\\ \end{cases}\]
\end{remark}

We show now that, as in the linear setting, the subdifferential of a continuous convex function is nonempty.
\begin{theorem} \label{thm-nonempty-subdiff}
Consider $f : X \to (-\infty,\infty]$ a convex function and let $x \in \dom f$. If $f$ is continuous at $x$, then $\partial f(x) \ne \emptyset$.
\end{theorem}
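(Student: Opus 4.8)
The plan is to apply the supporting-hyperplane analogue, Proposition~\ref{prop-normal-cone-int}(ii), to the epigraph of $f$ regarded as a subset of the product space $X\times\R$, and then to rewrite the normal vector so produced in the canonical form of Definition~\ref{def-subdiff}. First I would note that $X\times\R$ inherits all the standing assumptions of Section~\ref{sect-tangent-sp}: from the facts recalled in Section~\ref{sect-prelim}, it is a locally compact, uniquely geodesic $\CAT(\kappa)$ space with the same $\kappa>0$, and it has the geodesic extension property around $(x,f(x))$ with the constant $R_x$. Since $f$ is convex, $\epi f$ is a nonempty convex subset of $X\times\R$ containing $(x,f(x))$, and I claim $(x,f(x))\notin\inte\bigl(\overline{\epi f}\bigr)$: for any $\eps>0$ the point $(x,f(x)-\eps)$ is not in $\overline{\epi f}$, since a sequence $(y_n,\lambda_n)\in\epi f$ converging to it would satisfy $\lambda_n\ge f(y_n)$ while, by continuity of $f$ at $x$, $f(y_n)\to f(x)$, forcing $f(x)-\eps\ge f(x)$. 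By the contrapositive of Proposition~\ref{prop-normal-cone-int}(ii) applied in $X\times\R$ to $C=\epi f$ and the point $(x,f(x))$, together with Remark~\ref{rmk-normal-cone}(i), there is a nonconstant geodesic $\sigma:[0,R_x]\to X\times\R$ issuing at $(x,f(x))$ with $([\sigma],1)\in N_{\epi f}(x,f(x))$.

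Next I would normalize $\sigma$. Using the description of geodesics in $X\times\R$ from Section~\ref{sect-prelim}, write $\sigma(t)=\bigl(\gamma(at),\alpha(bt)\bigr)$ for $t\in[0,R_x]$, where $a,b\ge 0$, $a^2+b^2=1$, $\gamma$ is a geodesic in $X$ issuing at $x$, and, when $b>0$, $\alpha$ is either $u\mapsto f(x)+u$ or $u\mapsto f(x)-u$. By Remark~\ref{rmk-def-proximal-normal-cone}(i), $P_{\epi f}(\sigma(R_x))=(x,f(x))$, so $R_x=d_2(\sigma(R_x),(x,f(x)))$ is the minimal distance from $\sigma(R_x)$ to $\epi f$. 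If $\alpha$ were the increasing map, then $(x,f(x)+bR_x)\in\epi f$ would lie at distance $aR_x$ from $\sigma(R_x)$; since $aR_x\le R_x$ with equality only if $b=0$, this is impossible unless $b=0$. It therefore remains to exclude $b=0$: suppose $b=0$, so that $\sigma(t)=(\gamma(t),f(x))$ with $\gamma$ nonconstant. Pick $\rho\in(0,D_\kappa/2)$ small enough that $B(x,\rho)\subseteq\dom f$, which is possible since $f$ is bounded near $x$ by Lemma~\ref{lemma-bd-Lip}, and such a ball is convex. For each $y\in B(x,\rho)$, testing the inequality of Remark~\ref{rmk-normal-cone}(v) against $w=(y,f(y))\in\epi f$ and using the additivity of the scalar product, Lemma~\ref{lemma-add-scalar-prod}(i), collapses it (the $\R$-component of $\sigma$ being trivial) to $\langle([\gamma],1),([\gamma^y],d(x,y))\rangle\le 0$, where $\gamma^y$ is the geodesic from $x$ to $y$; hence $([\gamma],1)\in N_{B(x,\rho)}(x)$ by Remark~\ref{rmk-normal-cone}(v). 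But $x\in\inte(B(x,\rho))$, so $N_{B(x,\rho)}(x)=\{o_x\}$ by Proposition~\ref{prop-normal-cone-int}(i), contradicting that $\gamma$ is nonconstant.

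Consequently $b>0$ and $\alpha(u)=f(x)-u$. Setting $r=a/b\ge 0$ gives $\sqrt{r^2+1}=1/b$, so $\sigma(t)=\bigl(\gamma\bigl(rt/\sqrt{r^2+1}\bigr),\,f(x)-t/\sqrt{r^2+1}\bigr)$ for $t\in[0,R_x]$, which is precisely the geodesic associated in Definition~\ref{def-subdiff} to the element $([\gamma],r)\in T_xX$ (with $\gamma$ read as the constant geodesic at $x$ when $a=0$). Since $([\sigma],1)\in N_{\epi f}(x,f(x))$, this means $([\gamma],r)\in\partial f(x)$, so $\partial f(x)\ne\emptyset$. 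The main obstacle is this normalization step, and within it the exclusion of the ``horizontal'' possibility $b=0$: in contrast to the linear case there is no dimension argument available, so one has to fall back on comparing $\epi f$ with a small ball and invoking the fact (Proposition~\ref{prop-normal-cone-int}(i)) that interior points have trivial normal cone.
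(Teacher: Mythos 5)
Your proof is correct and follows essentially the same route as the paper's: obtain a nonzero element of $N_{\epi f}(x,f(x))$ via Proposition~\ref{prop-normal-cone-int}(ii) applied in $X\times\R$, then exclude the upward and horizontal components of the resulting direction and normalize to the form of Definition~\ref{def-subdiff}. The paper carries out the two exclusion steps slightly differently (taking $y=x$, $a=1$ in the scalar-product inequality for the upward case, and a point of $\dom f$ on $\gamma$ itself rather than the normal cone of a small ball for the horizontal case), but these are local variations on the same argument.
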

\begin{proof}
Since $f$ is continuous at $x$, there exists $\eps \in (0, D_\kappa/2)$ so that $f$ is continuous on $\overline{B}(x,\eps)$.

Suppose $N_{\epi f}(x,f(x)) = \{o_{(x,f(x))}\}$ and denote $B = \overline{B}((x,f(x)),\eps)$. Using Remark \ref{rmk-normal-cone}(iv), $N_{\epi f \cap B}(x,f(x)) = \{o_{(x,f(x))}\}$.  

Note that the set $\epi f \cap B$ is closed. Indeed, let $(y_n,s_n) \subseteq \epi f \cap B$ be a sequence whose limit is $(y,s)$. Clearly, $(y,s) \in B$. Moreover, for all $n \in \N$, $d(y_n,x) \le d_2((y_n,s_n),(x,f(x)) \le \eps$, so $y \in \overline{B}(x,\eps)$ and $f$ is continuous at $y$. As $y_n \to y$, $s_n \to s$, and $f(y_n) \le s_n$, we get $f(y) \le s$, and hence $(y,s) \in \epi f$. 

Applying Proposition \ref{prop-normal-cone-int}(ii), we conclude that  $(x,f(x)) \in \inte(\epi f \cap B)$, which is a contradiction because $(x,f(x)) \in \bd(\epi f)$. 

Thus, there exists $([\sigma],1) \in N_{\epi f}(x,f(x))$, where $\sigma : [0,R_x] \to X \times \R$ is a nonconstant geodesic. Write $R_x^2 = r^2 + s^2$ and $\sigma : [0,R_x] \to X \times \R$ as $\sigma(t) = \left(\gamma(rt/R_x), \alpha(st/R_x)\right)$ for all $t \in [0,R_x]$, where $\gamma : [0,r] \to X$ and $\alpha : [0,s] \to \R$ are geodesics. Then $\alpha(t) = f(x) + t$ for all $t \in [0,s]$ or $\alpha(t) = f(x) - t$ for all $t \in [0,s]$.

For $y \in \dom f$ and $a \ge 0$, consider $\gamma^y : [0,d(x,y)] \to X$ the geodesic from $x$ to $y$ and $\beta : [0,|f(y) + a - f(x)|] \to \R$ the geodesic from $f(x)$ to $f(y) + a$. As in the proof of Proposition \ref{prop-subdiff-convex}, we have
\[\langle ([\gamma],r), ([\gamma^{y}],d(x,y)) \rangle + \langle ([\alpha],s), ([\beta],|f(y) + a - f(x)|) \rangle \le 0.\]

Suppose first that $s=0$. Then $r > 0$ and $\langle ([\gamma],r), ([\gamma^{y}],d(x,y)) \rangle \le 0$ for all $y \in \dom f$. If $y \in \gamma((0,\min\{\eps,r\}])$, then $y \in \dom f$ and $\angle(\gamma,\gamma^y) = 0$, so $0 < rd(x,y)\cos\angle(\gamma,\gamma^y) = \langle ([\gamma],r), ([\gamma^{y}],d(x,y)) \rangle \le 0$, a contradiction. Thus, $s > 0$. We distinguish the following two cases.

Case I: $\alpha(t) = f(x) + t$ for all $t \in [0,s]$. Then
\[\langle ([\gamma],r), ([\gamma^{y}],d(x,y)) \rangle + s(f(y) + a - f(x)) \le 0,\]
for any $y \in \dom f$ and $a \ge 0$. Taking $y=x$ and $a=1$, we obtain again a contradiction.

Case II: $\alpha(t) = f(x) - t$ for all $t \in [0,s]$. Denoting $u = r/s$, we have $r/R_x = u/\sqrt{u^2 + 1}$, $s/R_x = 1/\sqrt{u^2 + 1}$. Hence,
\[\sigma(t) = \left(\gamma(ut/\sqrt{u^2 + 1}), f(x) - t/\sqrt{u^2+1}\right) \quad \text{for all }t \in [0,R_x]\]
and we conclude that $([\gamma],u) \in \partial f(x)$. 
\end{proof}

\begin{remark}\label{rmk-bd-decay}
If $f : X \to (-\infty, \infty]$ is a convex function and $\partial f(x) \ne \emptyset$ for some $x \in \dom f$ (e.g., if $f$ is continuous at $x$), then $f$ has bounded decay rate in the sense that there exists a constant $c \ge 0$ such that the function $y \mapsto f(y) + c d(x,y)$ is bounded below.

Indeed, if there exists $([\gamma],c) \in \partial f(x)$, applying Proposition \ref{prop-subdiff-convex}, we get that 
\[f(y) \ge f(x) + \langle ([\gamma],c), ([\gamma^y],d(x,y))\rangle \ge f(x) - c d(x,y),\] 
for all $y \in X$.
\end{remark}

In what follows, we suppose additionally that $X$ has diameter smaller than $D_\kappa/2$. We first recall the following result (see, e.g., \cite[Chapter II, Corollary 3.6]{Bri99}).

\begin{proposition}[First variation formula]\label{prop-FVF}
Let $x,y \in X$ with $x \ne y$, $\gamma : [0,l] \to X$ be a geodesic issuing at $x$ and $\eta$ the geodesic from $x$ to $y$. Then
\[\lim_{t \searrow 0}\frac{d(\gamma(tl),y) - d(x,y)}{t} = - \frac{1}{d(x,y)}\langle ([\gamma],l),([\eta],d(x,y))\rangle.\]
\end{proposition}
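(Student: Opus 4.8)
This is the first variation formula, classical in spaces of curvature bounded above (see \cite[Chapter II, Corollary 3.6]{Bri99}); one self-contained route is as follows. Write $D=d(x,y)>0$ and $\alpha=\angle(\gamma,\eta)$, and set $p_t=\gamma(tl)$, so that $d(x,p_t)=tl$ and, by the triangle inequality, $d(p_t,y)\to D$ as $t\searrow 0$. Since $\langle([\gamma],l),([\eta],D)\rangle=lD\cos\alpha$, the assertion is equivalent to
\[
\lim_{t\searrow 0}\frac{d(p_t,y)-D}{t}=-l\cos\alpha ,
\]
and the plan is to bound the $\limsup$ and the $\liminf$ separately.

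\emph{Upper bound.} Fix $\eps>0$. By the very definition of the Alexandrov angle as the $\limsup$ of Euclidean comparison angles, there is $\delta\in(0,D)$ with $\angle_{\overline x}\!\left(\overline{\gamma(s)},\overline{\eta(s')}\right)\le\alpha+\eps$ for all $s,s'\in(0,\delta]$. For $t$ small enough that $tl\le\delta$, applying the triangle inequality through the point $\eta(\delta)\in[x,y]$ (so $d(\eta(\delta),y)=D-\delta$) together with the Euclidean law of cosines for the comparison triangle of $\Delta(x,p_t,\eta(\delta))$ gives
\[
d(p_t,y)\ \le\ \sqrt{(tl)^2+\delta^2-2tl\,\delta\cos(\alpha+\eps)}\ +\ (D-\delta)\ =\ D-tl\cos(\alpha+\eps)+O(t^2/\delta).
\]
Hence $\limsup_{t\searrow 0}\big(d(p_t,y)-D\big)/t\le-l\cos(\alpha+\eps)$, and letting $\eps\searrow 0$ yields the upper bound.

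\emph{Lower bound.} Here I would pass to the model space. Take a comparison triangle $\Delta(\overline x,\overline{p_t},\overline y)\subseteq M_\kappa^2$ for $\Delta(x,p_t,y)$; by the $\CAT(\kappa)$ angle comparison its angle at $\overline x$ is at least $\angle_x(p_t,y)$, and the latter equals $\alpha$ since $[x,p_t]\subseteq\Img\gamma$ and $[x,y]=\Img\eta$ leave the germs at $x$ unchanged. On the range of distances relevant here (all below $D_\kappa$), the side of a triangle in $M_\kappa^2$ opposite a vertex is nondecreasing in the angle at that vertex, so replacing that angle by $\alpha$ can only decrease $d(\overline{p_t},\overline y)$; thus $d(p_t,y)\ge d_\alpha(t)$, where $d_\alpha(t)$ is the distance in $M_\kappa^2$ between two points at distances $tl$ and $D$ from a common point with included angle $\alpha$. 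As $t\mapsto d_\alpha(t)$ is a smooth distance function on the Riemannian manifold $M_\kappa^2$, the elementary Riemannian first variation formula gives $d_\alpha(t)=D-tl\cos\alpha+o(t)$, so $\liminf_{t\searrow 0}\big(d(p_t,y)-D\big)/t\ge-l\cos\alpha$. Combining the two bounds gives the limit $-l\cos\alpha=-\tfrac1D\langle([\gamma],l),([\eta],D)\rangle$.

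The delicate step is the lower bound: it is where the curvature hypothesis genuinely enters, through the $\CAT(\kappa)$ angle comparison and the monotonicity of the model-space law of cosines in the included angle, and one must check that all the distances at play stay below $D_\kappa$ so that these facts apply. An alternative is to combine the $2$-uniform convexity of $X$ (which makes $t\mapsto d(p_t,y)^2$ a suitably convex perturbation along $\gamma$ and hence forces the one-sided limit to exist) with the inequality $\angle_x(p_t,y)\le\overline{\angle}^{(\kappa)}_x(p_t,y)$ to identify the limit; but the two-sided estimate above seems cleaner. Since the result is standard, one may of course simply invoke \cite[Chapter II, Corollary 3.6]{Bri99}.
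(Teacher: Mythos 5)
The paper offers no proof of this proposition at all---it is simply recalled, with the citation \cite[Chapter II, Corollary 3.6]{Bri99}---so there is no in-paper argument to compare against; your self-contained proof is essentially the standard textbook argument and is correct. The upper bound (definition of the Alexandrov angle via Euclidean comparison angles, triangle inequality through $\eta(\delta)$, Euclidean law of cosines, then $\eps\searrow 0$ after $t\searrow 0$) is sound, and the lower bound correctly combines the $\CAT(\kappa)$ angle comparison (comparison angle at $\overline{x}$ in $M_\kappa^2$ is at least $\angle_x(p_t,y)=\alpha$, since the angle depends only on the germs at $x$) with the monotonicity of the model-space law of cosines in the included angle and the smooth expansion $d_\alpha(t)=D-tl\cos\alpha+o(t)$ in $M_\kappa^2$. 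The one point genuinely requiring care---which you flag---is that this monotonicity and the existence of the comparison triangle need all relevant distances below $D_\kappa$ (perimeter below $2D_\kappa$); this is guaranteed by the standing assumption $\diam(X)<D_\kappa/2$ that the paper imposes just before stating the proposition. Given that the result is classical, simply invoking the citation, as the paper does, would also have sufficed.
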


\begin{remark}\label{rmk-scalar-prod-squared-dist}
Let $x,y \in X$ with $x \ne y$, $\gamma : [0,l] \to X$ be a geodesic issuing at $x$ and $\eta$ be the geodesic from $x$ to $y$. Then
\[\langle ([\gamma],l), ([\eta],d(x,y))  \rangle \ge \frac{1}{2}\left(d(x,y)^2 - d(\gamma(l),y)^2\right).\]

Indeed, since the function $d(y,\cdot)^2$ is convex, we have that for all $t \in (0,1)$,
\[d(y,\gamma(tl))^2 \le (1-t)d(y,x)^2 + t d(y,\gamma(l))^2,\]
from where
\[ \left(d(y,\gamma(tl)) - d(y,x)\right)\left(d(y,\gamma(tl)) + d(y,x)\right)= d(y,\gamma(tl))^2 - d(y,x)^2 \le t(d(y,\gamma(l))^2 - d(y,x)^2).\]
One obtains the conclusion after dividing by $t$, letting $t \searrow 0$, and applying Proposition \ref{prop-FVF}.
\end{remark}

Consider $f : X \to (-\infty, \infty]$ a proper and convex function that has complete sublevel sets. 

\begin{remark}
Note first that $f$ is lower semicontinuous and that all its sublevel sets are convex. Moreover, by the Hopf-Rinow theorem, we deduce that all sublevel sets of $f$ are also compact. Thus, $f$ attains its minimum and hence is bounded below.

Indeed, denote $a = \inf f(X)$ and let $(x_n) \subseteq X$ such that $f(x_n) \to a$. Take $y \in \dom f$. If $f(y) = a$, then $f$ attains its minimum at $y$. Otherwise, $a < f(y)$, so for $n $ sufficiently large, $x_n$ belongs to the compact set $\{x \in X \mid f(x) \le f(y)\}$. Thus, there exists a subsequence $(x_{n_k})$ of $(x_n)$ that converges to some $x \in X$. By the lower semicontinuity of $f$, we obtain that $f(x) = a$.
\end{remark}

The {\it resolvent} of $f$ is the mapping $J_\lambda^f : X \to \dom f$ defined for $\lambda > 0$ by
\begin{equation}\label{def-resolvent}
J_\lambda^f(x) := \argmin_{y \in X}\left(f(y) + \frac{1}{2\lambda}d(x,y)^2\right) \quad \text{for all } x \in X.
\end{equation}
Observe that there exists a unique point attaining the minimum in \eqref{def-resolvent}. Indeed, existence follows because the sublevel sets of the function $y \in X \mapsto f(y) + \frac{1}{2\lambda}d(x,y)^2$ are closed subsets of sublevel sets of the function $f$ and hence are compact. Uniqueness follows by convexity of $f$ and by $2$-uniform convexity of $X$.

In the following proposition, we establish a connection between the resolvent of $f$ and its subdifferential.
\begin{proposition} \label{prop-resolv-subdiff}
Let $\lambda > 0$, and let $z \in \dom f$. For $x \in X$, denote by $\gamma^x$ the geodesic from $z$ to $x$. Given $x \in X$, we have that $z = J_\lambda^f(x)$ if and only if $([\gamma^x],d(x,z)/\lambda) \in \partial f(z)$.
\end{proposition}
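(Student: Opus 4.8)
The plan is to reduce both directions to the variational characterization of the subdifferential in Proposition~\ref{prop-subdiff-convex}. Applied at the base point $z$, that characterization says $([\gamma^x],d(x,z)/\lambda)\in\partial f(z)$ holds exactly when
\[
\langle ([\gamma^x],d(x,z)/\lambda),([\gamma^y],d(z,y))\rangle + f(z)\ \le\ f(y)\qquad\text{for all }y\in X,
\]
where, consistently with the statement, $\gamma^y$ is the geodesic from $z$ to $y$; call this condition $(\ast)$. I would first dispose of the degenerate case $x=z$: then $d(x,z)=0$, so $([\gamma^x],d(x,z)/\lambda)=o_z$ and $(\ast)$ reduces to $f(z)\le f(y)$ for all $y$, i.e.\ $z\in\argmin f$; on the other hand $z=J_\lambda^f(z)$ means $z$ minimizes $y\mapsto f(y)+\frac1{2\lambda}d(z,y)^2$, and sliding along a geodesic issuing at $z$ toward a putative point with smaller $f$-value and using convexity shows this too is equivalent to $z\in\argmin f$. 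So from here on I may assume $x\ne z$.

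For the implication ``$(\ast)\Rightarrow z=J_\lambda^f(x)$'', I would apply, for each $y\ne z$, Remark~\ref{rmk-scalar-prod-squared-dist} at the base point $z$ with the geodesic $\gamma^y$ issuing at $z$ and the geodesic $\gamma^x$ from $z$ to $x$, which gives $\langle([\gamma^y],d(z,y)),([\gamma^x],d(z,x))\rangle\ge\frac12\big(d(z,x)^2-d(y,x)^2\big)$. Feeding this into $(\ast)$ (after pulling the scalar $1/\lambda$ out of the inner product) collapses everything to
\[
f(y)+\frac1{2\lambda}d(x,y)^2\ \ge\ f(z)+\frac1{2\lambda}d(x,z)^2,
\]
which holds trivially at $y=z$ too. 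Hence $z$ minimizes $y\mapsto f(y)+\frac1{2\lambda}d(x,y)^2$, and since this minimizer is unique (as already noted after the definition of the resolvent), $z=J_\lambda^f(x)$.

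For the converse ``$z=J_\lambda^f(x)\Rightarrow(\ast)$'', I would fix $y\in\dom f$ with $y\ne z$ (the remaining $y$ making $(\ast)$ trivial), set $y_t=(1-t)z+ty$ for $t\in(0,1)$, and combine two inequalities: convexity of $f$ gives $f(y_t)-f(z)\le t\big(f(y)-f(z)\big)$, and minimality of $z$ for the resolvent problem gives $f(y_t)-f(z)\ge\frac1{2\lambda}\big(d(x,z)^2-d(x,y_t)^2\big)$. Dividing the resulting inequality by $t>0$ and writing the right-hand side as $\frac1{2\lambda}\cdot\frac{d(x,z)-d(x,y_t)}{t}\cdot\big(d(x,z)+d(x,y_t)\big)$, I would let $t\searrow 0$: the first factor tends to $\frac1{d(z,x)}\langle([\gamma^y],d(z,y)),([\gamma^x],d(z,x))\rangle$ by the first variation formula (Proposition~\ref{prop-FVF}, with base point $z$, endpoint $x$, along $\gamma^y$), and the second tends to $2d(x,z)$ by continuity of the metric since $y_t\to z$. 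This yields $f(y)-f(z)\ge\frac1{\lambda}\langle([\gamma^y],d(z,y)),([\gamma^x],d(z,x))\rangle=\langle([\gamma^x],d(x,z)/\lambda),([\gamma^y],d(z,y))\rangle$, which is $(\ast)$; Proposition~\ref{prop-subdiff-convex} then gives $([\gamma^x],d(x,z)/\lambda)\in\partial f(z)$.

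The step I expect to be the main obstacle is the limit in the converse direction: one must verify that both factors converge and that the second limit, $2d(x,z)$, is finite and nonzero, so that the limit of the product equals the product of the limits; one must also keep the degenerate configurations ($x=z$, $y=z$, $y\notin\dom f$) carefully separated, since both the first variation formula and Remark~\ref{rmk-scalar-prod-squared-dist} are stated for distinct points. The forward direction, by contrast, is largely bookkeeping once one spots the correct instance of Remark~\ref{rmk-scalar-prod-squared-dist} and invokes the already-established uniqueness of $J_\lambda^f(x)$.
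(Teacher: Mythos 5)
Your proposal is correct, and it splits into one half that coincides with the paper's argument and one half that takes a genuinely different route. The direction ``$([\gamma^x],d(x,z)/\lambda)\in\partial f(z)\Rightarrow z=J_\lambda^f(x)$'' is exactly the paper's: both of you feed Remark~\ref{rmk-scalar-prod-squared-dist} into the variational inequality of Proposition~\ref{prop-subdiff-convex} and invoke uniqueness of the resolvent. For the converse, however, the paper works directly with Definition~\ref{def-subdiff}: after rescaling ($([\gamma^x],d(x,z)/\lambda)\in\partial f(z)$ iff $([\gamma^x],d(z,x))\in\partial(\lambda f)(z)$), it exhibits the geodesic $\sigma$ explicitly and verifies by a distance computation in $X\times\R$ that $P_{\epi(\lambda f)}(\sigma(\sqrt{r^2+1}))=(z,\lambda f(z))$, using the defining inequality of $J_\lambda^f$. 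You instead stay entirely at the level of the variational characterization: you derive $(\ast)$ from the minimality of $z$ by sliding along $y_t=(1-t)z+ty$, combining convexity of $f$ with the resolvent inequality, and passing to the limit via the first variation formula (Proposition~\ref{prop-FVF}), then conclude by Proposition~\ref{prop-subdiff-convex}. Both are valid; your version is more symmetric (both implications are mediated by $(\ast)$) and avoids the epigraph computation and the $\lambda$-rescaling, at the price of having to police the degenerate configurations $x=z$ and $y=z$ where the first variation formula does not apply --- which you do correctly, and which the paper's projection computation sidesteps because it goes through verbatim when $d(z,x)=0$. Since Remark~\ref{rmk-scalar-prod-squared-dist} is itself proved from the first variation formula, you are not importing any machinery beyond what the paper already uses.
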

\begin{proof}
Note that, by Proposition \ref{prop-subdiff-convex}, $([\gamma^x],d(x,z)/\lambda) \in \partial f(z)$ if and only if $([\gamma^x],d(z,x)) \in \partial (\lambda f)(z)$.

Suppose first that $z = J_\lambda^f(x)$, and denote $r = d(z,x)$. We prove that $([\sigma],1) \in N_{\epi (\lambda f)}(z,\lambda f(z))$, where $\sigma : [0,\sqrt{r^2 + 1}] \to X \times \R$:
\[\sigma(t) = \left(\gamma^x\left(rt/\sqrt{r^2+1}\right), \lambda f(z)-t/\sqrt{r^2+1}\right).\]
Let $(y,\beta) \in \epi(\lambda f)$. Then $y \in \dom f$, $\beta \ge \lambda f(y)$, and we have 
\begin{align*}
d_2(\sigma(\sqrt{r^2+1}),(y,\beta))^2 & = d_2((x,\lambda f(z)-1),(y,\beta))^2 = d(x,y)^2 + (\lambda f(z) - 1 - \beta)^2\\
&  = 1 + d(x,y)^2 -2\lambda f(z) + 2\beta + (\lambda f(z) - \beta)^2\\
& \ge 1 +  d(x,y)^2 +2\lambda (f(y) - f(z))\\
& \ge 1 + d(z,x)^2 = d_2(\sigma(\sqrt{r^2+1}),(z,\lambda f(z)))^2 \quad \text{by } \eqref{def-resolvent}.
\end{align*}
Hence, $P_{\epi (\lambda f)}(\sigma(\sqrt{r^2+1})) = (z,\lambda f(z))$, so $([\sigma],1) \in N_{\epi (\lambda f)}(z,\lambda f(z))$. This shows that $([\gamma^x],d(z,x)) \in \partial (\lambda f)(z)$.

Suppose now that $([\gamma^x],d(z,x)) \in \partial (\lambda f)(z)$ and let $y \in X$. By Proposition \ref{prop-subdiff-convex},
\[\langle ([\gamma^x],d(z,x)), ([\gamma^y],d(z,y)) \rangle \le \lambda(f(y) - f(z)).\]
Applying Remark \ref{rmk-scalar-prod-squared-dist} (with $\gamma = \gamma^y$ and $\eta = \gamma^x$), we get
\[\frac{1}{2}\left(d(z,x)^2 - d(y,x)^2\right) \le \langle ([\gamma^y],d(z,y)), ([\gamma^x],d(z,x)) \rangle.\]
The above two inequalities yield that
\[f(z) + \frac{1}{2\lambda}d(x,z)^2 \le f(y) + \frac{1}{2\lambda}d(x,y)^2.\]
We conclude that $z = J_\lambda^f (x)$.
\end{proof}

\begin{example}
Let $C \subseteq X$ be nonempty, convex, and complete, and let $z \in C$ and $\lambda > 0$. Then $\partial \delta_C(z) = N_C(z)$ and $J_\lambda^{\delta_C} = P_C$.
In this case, given $x \in X$, Proposition \ref{prop-resolv-subdiff} reduces to $z = P_C(x)$ if and only if $([\gamma^x],d(x,z)/\lambda) \in N_C(z)$.
\end{example}

We give next a necessary condition for a point to be a minimizer of the sum of two convex functions starting with the following two lemmas.

\begin{lemma} \label{lemma-sum-rule}
Let $f : X \to (-\infty,\infty]$ be convex. Given $x \in \dom f$ and $\lambda \ge 0$, define the function $\varphi : X \to (-\infty, \infty]$ by $\varphi(y) = f(y) + \lambda d(y,x)^2$. Then $\partial \varphi(x) = \partial f(x)$. 
\end{lemma}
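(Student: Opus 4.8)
The plan is to work entirely through the variational characterization of the subdifferential furnished by Proposition~\ref{prop-subdiff-convex}. Since at this stage $X$ has diameter smaller than $D_\kappa/2$, it is $2$-uniformly convex, so $y \mapsto d(y,x)^2$ is convex; hence $\varphi$ is convex with $\dom\varphi = \dom f$, and $\varphi(x) = f(x)$ because $d(x,x)=0$. Writing $\gamma^y$ for the geodesic from $x$ to $y$, Proposition~\ref{prop-subdiff-convex} tells us that $([\gamma],r) \in \partial\varphi(x)$ is equivalent to
\[\langle ([\gamma],r),([\gamma^y],d(x,y))\rangle + f(x) \le f(y) + \lambda\, d(y,x)^2 \quad\text{for all } y \in X,\]
while $([\gamma],r) \in \partial f(x)$ is equivalent to the same inequality with the term $\lambda\, d(y,x)^2$ removed. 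The inclusion $\partial f(x) \subseteq \partial\varphi(x)$ is therefore immediate from $\lambda\, d(y,x)^2 \ge 0$.

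For the reverse inclusion, the idea is to test the defining inequality for $\partial\varphi(x)$ not at a given $y$ but at the points $y_t = (1-t)x + ty$ of the geodesic $[x,y]$ and to pass to the limit $t \searrow 0$, which kills the quadratic correction. So let $([\gamma],r) \in \partial\varphi(x)$ and fix $y \in \dom f$ with $y \ne x$ (the cases $y \notin \dom f$ and $y = x$ being trivial). For $t \in (0,1)$, the geodesic $\gamma^{y_t}$ from $x$ to $y_t$ is an initial subsegment of $\gamma^y$, so $[\gamma^{y_t}] = [\gamma^y]$ and $d(x,y_t) = t\,d(x,y)$; hence $([\gamma^{y_t}],d(x,y_t)) = t\,([\gamma^y],d(x,y))$ and, by homogeneity of the scalar product in its second argument,
\[\langle ([\gamma],r),([\gamma^{y_t}],d(x,y_t))\rangle = t\,\langle ([\gamma],r),([\gamma^y],d(x,y))\rangle.\]
Inserting $y_t$ into the inequality for $\partial\varphi(x)$, bounding $f(y_t) \le (1-t)f(x) + t f(y)$ by convexity of $f$, and using $d(y_t,x)^2 = t^2 d(x,y)^2$ gives
\[t\,\langle ([\gamma],r),([\gamma^y],d(x,y))\rangle + f(x) \le (1-t)f(x) + t f(y) + \lambda\, t^2 d(x,y)^2.\]
Subtracting $f(x)$, dividing by $t > 0$, and letting $t \searrow 0$ yields $\langle ([\gamma],r),([\gamma^y],d(x,y))\rangle + f(x) \le f(y)$, so $([\gamma],r) \in \partial f(x)$.

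I do not expect a genuine obstacle here: the only point that needs a moment's care is the identity $([\gamma^{y_t}],d(x,y_t)) = t\,([\gamma^y],d(x,y))$ together with the linearity of $v \mapsto \langle ([\gamma],r),v\rangle$ under this scaling, which is what makes the limiting argument work; the rest is routine convexity bookkeeping. Note that one cannot shortcut the proof by appealing to a subdifferential sum rule, since establishing such a rule is the very purpose of this section; the fact $\partial(\lambda\, d(\cdot,x)^2)(x) = \{o_x\}$ (cf.\ Example~\ref{ex-squared-dist} with $z = x$) only serves as the heuristic behind the statement.
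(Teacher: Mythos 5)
Your proposal is correct and follows essentially the same route as the paper: both directions go through Proposition~\ref{prop-subdiff-convex}, the easy inclusion uses $\lambda d(y,x)^2 \ge 0$, and the converse tests the inequality at $(1-t)x+ty$, invokes convexity of $f$, divides by $t$, and lets $t \searrow 0$. Your explicit remark that $2$-uniform convexity makes $\varphi$ convex (needed to apply the proposition to $\varphi$) is a point the paper leaves implicit.
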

\begin{proof}
For $y \in X$, denote by $\gamma^y$ the geodesic from $x$ to $y$. 
Let $([\gamma],r) \in \partial f(x)$. We apply Proposition \ref{prop-subdiff-convex} and have that for all $y \in X$,
\[\langle ([\gamma],r), ([\gamma^y],d(x,y))\rangle  + \varphi(x)  = \langle ([\gamma],r), ([\gamma^y],d(x,y))\rangle  + f(x) \le f(y) \le \varphi(y).\]
Therefore, $([\gamma],r) \in \partial \varphi(x)$. 

Conversely, suppose that $([\gamma],r) \in \partial \varphi(x)$. Take $y \in X$ with $y \ne x$ and $z = (1-t)x + ty$ for some $t \in (0,1)$. Then
\[\langle ([\gamma],r), ([\gamma^y],d(x,z))\rangle  + f(x)  = \langle ([\gamma],r), ([\gamma^z],d(x,z))\rangle  + \varphi(x) \le\varphi(z) = f(z) + \lambda d(z,x)^2,\]
from where
\[t\langle ([\gamma],r), ([\gamma^y],d(x,y))\rangle  + f(x)  \le (1-t)f(x) + tf(y) + \lambda t^2d(x,y)^2.\]
After rearranging, dividing by $t$, and then letting $t \searrow 0$, we obtain that 
\[\langle ([\gamma],r), ([\gamma^y],d(x,y))\rangle  + f(x) \le f(y).\]
Since the above inequality clearly also holds for $y=x$, we deduce that $([\gamma],r) \in \partial f(x)$.
\end{proof}

\begin{lemma} \label{lemma-Lip-bd-subgr}
Let $f : X \to (-\infty,\infty]$ be convex, let $x \in \dom f$, and let $R \in (0,R_x)$. Suppose that $f$ is Lipschitz on $B(x,R)$ with Lipschitz constant $L$. If $y \in B(x,R/2)$ and $([\gamma],r) \in \partial f(y)$, then $r \le L$.
\end{lemma}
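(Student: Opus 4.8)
The plan is to use the variational characterization of the subdifferential from Proposition \ref{prop-subdiff-convex} together with the first variation formula, evaluating the subgradient inequality along a geodesic in the direction of $[\gamma]$ that stays inside $B(x,R)$. Concretely, suppose $([\gamma],r) \in \partial f(y)$ with $y \in B(x,R/2)$. We may assume $r > 0$, since otherwise there is nothing to prove. Choose a representative $\gamma : [0,l] \to X$ of the direction class with $l$ small enough that $z := \gamma(t)$ lies in $B(x,R)$ for all $t \in [0,l]$; this is possible because $d(x,y) < R/2$ and geodesics issuing at $y$ of sufficiently small length remain within $B(x,R)$ (using also that $X$ has the geodesic extension property around $x$ so that such a $\gamma$ exists). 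For each such $t$, Proposition \ref{prop-subdiff-convex} applied with the test point $\gamma(t)$ gives
\[
\langle ([\gamma],r), ([\gamma^{\gamma(t)}],d(y,\gamma(t)))\rangle + f(y) \le f(\gamma(t)),
\]
where $\gamma^{\gamma(t)}$ is the geodesic from $y$ to $\gamma(t)$, which is just (a reparametrization of) $\gamma$ itself. Hence the scalar product equals $r \cdot d(y,\gamma(t)) \cdot \cos 0 = r \cdot t$ (taking the representative so that $d(y,\gamma(t)) = t$), so the inequality reads
\[
r\, t + f(y) \le f(\gamma(t)).
\]

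Now I would bound the right-hand side using the Lipschitz property: since $\gamma(t) \in B(x,R)$ and $y \in B(x,R) \cap B(x,R/2)$, both points lie in the set where $f$ is $L$-Lipschitz, so $f(\gamma(t)) - f(y) \le L\, d(y,\gamma(t)) = L\, t$. Combining the two displays yields $r\, t \le L\, t$, and dividing by $t > 0$ gives $r \le L$, as desired.

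The main obstacle — really the only subtlety — is making sure the geodesic $\gamma$ representing the direction can be taken to reach points inside $B(x,R)$ while issuing at $y$, and that $d(y,\gamma(t)) = t$ along the chosen parametrization; the geodesic extension property around $x$ with constant $R_x > R$ and the inclusion $B(y, R/2) \subseteq B(x,R)$ (from $d(x,y) < R/2$) handle this. One could alternatively phrase the argument via the first variation formula (Proposition \ref{prop-FVF}): dividing $r\,t + f(y) \le f(\gamma(t))$ by $t$ and letting $t \searrow 0$ recovers $r \le$ the one-sided directional derivative of $f$ at $y$ along $\gamma$, which is at most $L$ by the Lipschitz bound; but the direct finite-$t$ estimate above is cleaner and avoids invoking the first variation formula. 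Everything else is a routine substitution into the already-established subgradient inequality.
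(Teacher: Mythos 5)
Your proof is correct and takes essentially the same route as the paper's: both substitute a point of the form $\gamma(t)$ into the subgradient inequality of Proposition \ref{prop-subdiff-convex} (where the scalar product becomes $rt$ since the test geodesic is $\gamma$ itself) and then apply the Lipschitz bound. The paper simply fixes $t=R/2$ once and for all, noting that $z=\gamma(R/2)\in B(x,R)$ because $d(x,y)<R/2$, which gives $rR/2\le f(z)-f(y)\le LR/2$ directly without any limiting argument.
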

\begin{proof}
Suppose $r > 0$, and let $z = \gamma(R/2)$. Then $z \in B(x,R)$ and we have
\[r R/2 = \langle ([\gamma],r), ([\gamma],R/2)\rangle \le f(z) - f(y) \le L d(z,y) = L R/2,\]
from where $r \le L$.
\end{proof}

\begin{theorem}\label{thm-sum-rule}
Let $f,g : X \to (-\infty, \infty]$ be two convex functions that have complete sublevel sets.  If $x \in \dom f \cap \dom g$ is a minimum point of $f+g$ and $f$ is continuous at $x$, then there exist $([\gamma],r) \in \partial f(x)$ and $([\eta],s) \in \partial g(x)$ such that $([\gamma],r)$ and $([\eta],s)$ are opposite to each other.
\end{theorem}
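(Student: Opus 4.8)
The plan is to use a Moreau--Yosida-type regularization of the problem: for each small parameter $\lambda>0$ I would produce a pair of nearby points carrying matched subgradients of $f$ and of $g$ (via the resolvent), and then pass to the limit using local compactness. The tools at hand point exactly this way: Lemma~\ref{lemma-sum-rule} lets me perturb $f$ without changing $\partial f(x)$, Proposition~\ref{prop-resolv-subdiff} turns resolvent identities into subgradients, and Lemma~\ref{lemma-Lip-bd-subgr} keeps the subgradient magnitudes bounded.

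First I would reduce to the case where $x$ is the \emph{unique} minimizer of $f+g$: replacing $f$ by $f+\mu\,d(\cdot,x)^2$ for a fixed $\mu>0$ does not change $\partial f(x)$ by Lemma~\ref{lemma-sum-rule}, keeps $f$ convex (by $2$-uniform convexity), lower semicontinuous, proper, continuous at $x$, and with complete sublevel sets, and makes $x$ the unique minimizer of the sum, since the new term strictly penalizes every $y\neq x$. After this reduction I fix, via Lemma~\ref{lemma-bd-Lip}, some $R_0\in(0,R_x)$ with $\overline B(x,2R_0)$ compact and $f$ Lipschitz of constant $L$ on $B(x,R_0)$; recall also that $f$ and $g$ attain their minima. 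For $\lambda>0$ I consider $F_\lambda(y,z)=f(y)+g(z)+\frac1{2\lambda}d(y,z)^2$ on $X\times X$: it is lower semicontinuous, its sublevel sets lie inside products of (compact) sublevel sets of $f$ and of $g$, so $F_\lambda$ attains a minimum at some $(x_\lambda,z_\lambda)$. Minimizing separately in $z$ (with $y=x_\lambda$ fixed) and in $y$ (with $z=z_\lambda$ fixed) shows $z_\lambda=J^g_\lambda(x_\lambda)$ and $x_\lambda=J^f_\lambda(z_\lambda)$, so Proposition~\ref{prop-resolv-subdiff}, applied to $f$ and to $g$, gives, with $s_\lambda:=d(x_\lambda,z_\lambda)/\lambda$,
\[([\gamma_\lambda],s_\lambda)\in\partial f(x_\lambda),\qquad ([\eta_\lambda],s_\lambda)\in\partial g(z_\lambda),\]
where $\gamma_\lambda$ is the geodesic from $x_\lambda$ to $z_\lambda$ and $\eta_\lambda$ the geodesic from $z_\lambda$ to $x_\lambda$; the two magnitudes coincide automatically because the same $\lambda$ appears on both sides.

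Next I would set up the limit. From $F_\lambda(x_\lambda,z_\lambda)\le F_\lambda(x,x)$ one gets $\frac1{2\lambda}d(x_\lambda,z_\lambda)^2\le f(x)+g(x)-\min f-\min g$, so $d(x_\lambda,z_\lambda)\to0$; a compactness argument (each cluster point of $(x_\lambda)$ is seen to minimize $f+g$, hence equals $x$ by uniqueness) yields $x_\lambda\to x$ and $z_\lambda\to x$, whence $f(x_\lambda)\to f(x)$ by continuity and $g(z_\lambda)\to g(x)$ from $g(z_\lambda)\le f(x)+g(x)-f(x_\lambda)$ together with lower semicontinuity. For $\lambda$ small $x_\lambda\in B(x,R_0/2)$, so Lemma~\ref{lemma-Lip-bd-subgr} gives $s_\lambda\le L$; passing to a subsequence, $s_\lambda\to s\in[0,L]$. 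Extending $\gamma_\lambda$ and $\eta_\lambda$ to geodesics of a fixed small length (possible for $\lambda$ small by the geodesic extension property around $x$) and using that $\overline B(x,2R_0)$ is compact, a further subsequence gives $\gamma_\lambda\to\gamma$ and $\eta_\lambda\to\eta$ uniformly, with $\gamma,\eta$ issuing at $x$. Rewriting the subgradient conditions as normal-cone conditions for $\epi f$ and $\epi g$ at $(x_\lambda,f(x_\lambda))\to(x,f(x))$ and $(z_\lambda,g(z_\lambda))\to(x,g(x))$, normalizing the representing geodesics in $X\times\R$ to a fixed length in $(0,D_\kappa/2)$ (legitimate by Remark~\ref{rmk-def-proximal-normal-cone}(i)), and invoking Remark~\ref{rmk-normal-cone}(vi) in $X\times\R$, I conclude $([\gamma],s)\in\partial f(x)$ and $([\eta],s)\in\partial g(x)$. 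If $s=0$ this already gives $o_x\in\partial f(x)\cap\partial g(x)$, and $o_x$ is opposite to itself, so we are done in that case.

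The main obstacle is the remaining case $s>0$: showing the directions $\gamma$ and $\eta$ are opposite, i.e.\ $\angle_x(\gamma,\eta)=\pi$. The idea is that for each small $\lambda$ the point $x_\lambda$ lies on the geodesic $\eta_\lambda$ (which runs from $z_\lambda$ through $x_\lambda$ to its far endpoint), so the straight-angle property of geodesics gives $\angle_{x_\lambda}(\gamma_\lambda,\eta_\lambda')=\pi$, where $\eta_\lambda'$ is the geodesic from $x_\lambda$ to the far endpoint of $\eta_\lambda$ and $\gamma_\lambda$ is the geodesic from $x_\lambda$ towards $z_\lambda$; and as $\lambda\downarrow0$ one has $\gamma_\lambda\to\gamma$ and $\eta_\lambda'\to\eta$ (using $x_\lambda\to x$, $d(x_\lambda,z_\lambda)\to0$, and continuous dependence of geodesics on their endpoints). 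The delicate step is to pass to the limit in the angle as the \emph{basepoints} $x_\lambda$ move, not just for a fixed basepoint: writing the Alexandrov angle as an infimum of comparison angles that are continuous in basepoint and endpoints (valid here since all triangles in play have perimeter below $2D_\kappa$), the angle is upper semicontinuous under joint convergence, so $\angle_x(\gamma,\eta)\ge\limsup_\lambda\angle_{x_\lambda}(\gamma_\lambda,\eta_\lambda')=\pi$, hence equality. This is precisely where local compactness (compactness of the spaces of directions, used to extract the convergent subsequences of $\gamma_\lambda,\eta_\lambda$) and the comparison geometry of $\CAT(\kappa)$ spaces are genuinely needed. With this, $([\gamma],s)$ and $([\eta],s)$ are the desired opposite subgradients of $f$ and $g$ at $x$.
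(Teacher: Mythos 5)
Your proposal follows essentially the same route as the paper's proof: the same doubled objective $f(y)+g(z)+\tfrac{n}{2}d(y,z)^2$ with a quadratic proximal penalty attached to $f$ (the paper carries this penalty along as $\varphi=f+d(\cdot,x)^2$ rather than folding it into a preliminary uniqueness reduction, but Lemma~\ref{lemma-sum-rule} makes the two organizations interchangeable), the same resolvent identities via Proposition~\ref{prop-resolv-subdiff}, the same Lipschitz bound on the multipliers from Lemma~\ref{lemma-Lip-bd-subgr}, and the same passage to the limit through Remark~\ref{rmk-normal-cone}(vi). The only step where you diverge is in proving that the limit directions are opposite: the paper concatenates the two extended geodesics into a single segment using the betweenness property and identifies $x$ as its midpoint, whereas you invoke upper semicontinuity of the Alexandrov angle under joint convergence of basepoints and endpoints; both arguments are valid.
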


\begin{proof}
Given $n \in \N$, define $h_n : X \times X \to (-\infty,\infty]$ by 
\[h_n(y,z) = f(y) + g(z) + d(y,x)^2 + \frac{n}{2}d(y,z)^2.\]
Denote $\alpha = \inf f(X) > -\infty$ and $\beta = \inf g(X) > -\infty$. For any $a \in \R$,
\[\{(y,z) \in X \times X \mid h_n(y,z) \le a\} \subseteq \{y \in X \mid f(y) \le a - \beta\} \times \{z \in X \mid g(z) \le a - \alpha\}.\]
Since both $f$ and $g$ have compact sublevel sets, we deduce that $h_n$ has compact sublevel sets and so attains its minimum at some point $(y_n,z_n) \in X \times X$. Observe that for all $n \in \N$, 
\begin{equation}\label{thm-sum-rule-eq1}
f(y_n) + g(z_n) + d(y_n,x)^2 + \frac{n}{2}d(y_n,z_n)^2 = h_n(y_n,z_n) \le h_n(x,x) = f(x) + g(x),
\end{equation}
from where
\[y_n \in \{y \in X \mid f(y) \le f(x) + g(x) - \beta\} \quad \text{and} \quad z_n \in \{z \in X \mid g(z) \le f(x) + g(x) - \alpha\}.\]
We can therefore suppose, after taking subsequences, that $(y_n)$ and $(z_n)$ converge to some $y^* \in X$ and $z^* \in X$, respectively. By \eqref{thm-sum-rule-eq1},
\[\frac{n}{2}d(y_n,z_n)^2 \le f(x) + g(x) - \alpha - \beta\]
for all $n \in \N$ and we conclude that $d(y_n,z_n) \to 0$. This yields that $y^* = z^*$. Moreover, using again \eqref{thm-sum-rule-eq1} and the lower semicontinuity of the functions $f$ and $g$, we obtain 
\[f(y^*) + g(y^*) + d(y^*,x)^2 \le f(x) + g(x) \le f(y^*) + g(y^*),\] 
and hence $y^*=x$.

Because $f$ is continuous at $x$, we have that $f(y_n) \to f(x)$. Taking limit superior as $n \to \infty$ in \eqref{thm-sum-rule-eq1} gives $\limsup_{n \to \infty} g(z_n) \le g(x)$. Combining this with the lower semicontinuity of $g$, we get that $g(z_n) \to g(x)$.

For all $n \in \N$, denote by $\gamma_n$ and $\eta_n$ the geodesics from $y_n$ to $z_n$ and from $z_n$ to $y_n$, respectively.

Define $\varphi : X \to (-\infty, \infty]$ by $\varphi(y) = f(y) + d(y,x)^2$. Note that $\varphi$ is convex and continuous at $x$ and hence Lipschitz on a neighborhood of $x$ by Lemma \ref{lemma-bd-Lip}. For all $n \in \N$ and all $y \in X$,
\[\varphi(y_n) + g(z_n) +  \frac{n}{2}d(y_n,z_n)^2 = h_n(y_n,z_n) \le h_n(y,z_n) = \varphi(y) + g(z_n) +  \frac{n}{2}d(y,z_n)^2,\]
so $y_n = J_{1/n}^\varphi(z_n)$. Similarly, $z_n = J_{1/n}^g(y_n)$. Proposition \ref{prop-resolv-subdiff} yields $([\gamma_n],nd(y_n,z_n)) \in \partial \varphi(y_n)$ and $([\eta_n],nd(y_n,z_n)) \in \partial g(z_n)$.

If $y_n = z_n$ for some $n \in \N$, we obtain that $y_n = x$. Thus, $o_x \in \partial \varphi(x)$ and $o_x \in \partial g(x)$. By Lemma \ref{lemma-sum-rule}, $o_x \in \partial f(x)$ and the conclusion follows.

Assume next that $y_n \ne z_n$ for all $n \in \N$. For $n \in \N$ sufficiently large, extend $\gamma_n$ beyond $z_n$ to a geodesic $\overline{\gamma}_n$ of length $R_x$. Likewise, extend $\eta_n$ beyond $y_n$ to a geodesic $\overline{\eta}_n$ of length $R_x$. By the betweenness property, $\Img \overline{\gamma}_n \cup \Img \overline{\eta}_n$ is a geodesic segment.

Denote $r_n = n d(y_n,z_n)$ for $n \in \N$. As $([\gamma_n],r_n) \in \partial \varphi(y_n)$, using Lemma \ref{lemma-Lip-bd-subgr}, we get that $(r_n)$ is bounded, so it has a convergent subsequence. By local compactness, for $l > 0$ small enough, $(\overline{\gamma}_n(l))$ and $(\overline{\eta}_n(l))$ have convergent subsequences as well. We can assume that $\overline{\gamma}_n(l) \to u \in X$, $\overline{\eta}_n(l) \to v \in X$, and $r_n \to r \ge 0$. Since $y_n \to x$ and $z_n \to x$, we deduce that $x = (1/2) u + (1/2) v$. Denote by $\gamma$ and $\eta$ the geodesics from $x$ to $u$ and from $x$ to $v$, respectively. Note that $([\gamma],r)$ and $([\eta],r)$ are opposite to each other.

We show now that $([\gamma],r) \in \partial f(x)$. For $n$ large enough, take $\sigma_n : [0,l] \to X \times \R$:
\[\sigma_n(t) = \left(\overline{\gamma}_n\left(r_nt/\sqrt{r_n^2+1}\right), \varphi(y_n) - t/\sqrt{r_n^2+1}\right).\] 
Note that $\sigma_n(t) \to \sigma(t)$ for all $t \in [0,l]$, where $\sigma : [0,l] \to X \times \R$ is defined by
\[\sigma(t) = \left(\gamma\left(rt/\sqrt{r^2+1}\right), \varphi(x) - t/\sqrt{r^2+1}\right).\] 
Because $([\gamma_n],r_n) \in \partial \varphi(y_n)$, we have that $([\sigma_n],1) \in N_{\epi \varphi}(y_n, \varphi(y_n))$. By Remark \ref{rmk-normal-cone}(vi), we obtain that $([\sigma],1) \in N_{\epi \varphi}(x, \varphi(x))$, so $([\gamma],r) \in \partial \varphi(x)$. Applying Lemma \ref{lemma-sum-rule}, $([\gamma],r) \in \partial f(x)$. Similarly, using the fact that $g(z_n) \to g(x)$, one can show that $([\eta],r) \in \partial g(x)$.
\end{proof}

\begin{remark}
Note that if $f,g : X \to (-\infty, \infty]$ are proper, convex, and with complete sublevel sets, then $f+g$ attains its minimum since its sublevel sets are closed subsets of sublevel sets of one of the functions and hence are compact. 

Observe also that, as pointed out in Section \ref{intro}, in the classical version of Theorem \ref{thm-sum-rule}, $f$ only needs to be continuous at some point in the domain of $g$ which is not necessarily a minimizer of $f+g$.
\end{remark}

\begin{remark} \label{rmk-sum-rule-CAT0}
Theorem \ref{thm-sum-rule} also holds in unbounded $\CAT(0)$ spaces. More precisely, suppose that $X$ is a complete, locally compact $\CAT(0)$ space with the geodesic extension property, and let $f, g : X \to (-\infty, \infty]$ be two proper, convex, and lower semicontinuous functions. First, observe that even though the sublevel sets of $f$ and $g$ are not necessarily compact, their resolvents are well-defined (see, e.g., \cite{Jo95, May98}). We point out below how one could modify the argument from the proof of Theorem \ref{thm-sum-rule} in order to show that if $x \in \dom f \cap \dom g$ is a minimum point of $f+g$, $f$ is continuous at $x$, and $\partial g(\overline{x}) \ne \emptyset$ for some $\overline{x} \in \dom g$, then there exist $([\gamma],r) \in \partial f(x)$ and $([\eta],s) \in \partial g(x)$ such that $([\gamma],r)$ and $([\eta],s)$ are opposite to each other. 

Indeed, note first that, by Remark \ref{rmk-bd-decay}, $f$ and $g$ have bounded decay rate. Thus, there exist the constants $c_1, c_2 \ge 0$ and  $k_1, k_2 \in \R$ such that $f(y) + c_1 d(y,x) \ge k_1$ and $g(y) + c_2 d(y,\overline{x}) \ge k_2$ for all $y \in X$. Take 
\[c = \max\{c_1, c_2\} \quad \text{and} \quad k = \min\{k_1, k_2 - c_2 d(x,\overline{x})\}.\] 
Then, applying the triangle inequality, for all $y \in X$ we have
\[f(y) + cd(y,x) \ge k \quad \text{and} \quad g(y) + cd(y,x) \ge k.\]
Given $n \in \N$, define $h_n : X \times X \to (-\infty,\infty]$ by 
\[h_n(y,z) = f(y) + d(y,x)^2 + g(z) + d(z,x)^2 + \frac{n}{2}d(y,z)^2.\]
Then 
\begin{align*}
h_n(y,z) &\ge k - c d(y,x) + d(y,x)^2+ k - c d(z,x) + d(z,x)^2 + \frac{n}{2}d(y,z)^2\\
&  \ge \left(d(y,x) - \frac{c}{2}\right)^2 + \left(d(z,x) - \frac{c}{2}\right)^2 + 2\left(k - \frac{c^2}{4}\right),
\end{align*}
for all $y, z \in X$. Let $b = 2(k - c^2/4)$. Given $a \in \R$ with $a \ge b$, denote 
\[Y_a = \{y \in X \mid (d(y,x) - c/2)^2 \le a - b\} \quad \text{and} \quad Z_a = \{z \in X \mid (d(z,x) - c/2)^2 \le a - b\}.\]
Then $Y_a$ and $Z_a$ are bounded and closed and hence compact. Since 
\[\{(y,z) \in X \times X \mid h_n(y,z) \le a\} \subseteq Y_a \times Z_a,\]
we conclude that the sublevel sets of $h_n$ are compact, so $h_n$ attains its minimum at some $(y_n,z_n) \in X \times X$. We then get that
$y_n \in Y_{f(x)+g(x)}$ and $z_n \in Z_{f(x)+g(x)}$ for all $n \in \N$. Therefore, we can suppose, after taking subsequences, that $(y_n)$ and $(z_n)$ converge to some $y^* \in X$ and $z^* \in X$. Similarly as before, one can show that $y^* = z^* = x$, and considering the functions  $\varphi, \psi : X \to (-\infty, \infty]$ defined by $\varphi(y) = f(y) + d(y,x)^2$ and $\psi(z) = g(z) + d(z,x)^2$, one writes $h_n(y,z) = \varphi(y) + \psi(z) + \frac{n}{2}d(y,z)^2$ and proceeds as in the proof of Theorem \ref{thm-sum-rule}.

\end{remark}

We finish with a consequence of the above result which, in a finite-dimensional Hilbert space, yields the existence of a separating hyperplane for two nonempty, convex, closed, and disjoint sets, at least one of which is bounded. It also provides a version in $\CAT(0)$ spaces of a recent separation result for convex sets proved in \cite[Theorem 4.5]{SilBerHer22} in the setting of Hadamard manifolds. 
\begin{corollary}\label{cor-separation}
Let $(X,d)$ be a complete, locally compact $\CAT(0)$ space with the geodesic extension property. If $C, D \subseteq X$ are nonempty, convex, closed, and disjoint and at least one of them is bounded, then there exist $x \in C$ and $([\gamma],1), ([\eta],1) \in T_{x}X$ opposite to each other such that
\[\sup\{\langle ([\gamma],1),([\gamma^y],d(x,y)) \rangle \mid  y \in D\} < 0\]
and
\[\sup \{\langle ([\eta],1),([\gamma^y],d(x,y)) \rangle \mid y \in C\}\le 0,\]
where $\gamma^y$ denotes the geodesic from $x$ to $y$.
\end{corollary}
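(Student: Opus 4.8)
The plan is to deduce this from Theorem~\ref{thm-sum-rule}---more precisely from its $\CAT(0)$ formulation in Remark~\ref{rmk-sum-rule-CAT0}---applied to the two convex functions $f = \dist(\cdot, D)$ and $g = \delta_C$, and then to read off the two inequalities from the explicit descriptions of $\partial f$ and of $\partial g = N_C$ provided by Examples~\ref{ex-dist-set} and~\ref{ex-indicator-fct}. Since $D$ is nonempty, convex, and closed, $f$ is convex (the distance to a convex set is convex in $\CAT(0)$ spaces), everywhere finite and $1$-Lipschitz, hence continuous on $X$; since $C$ is closed, $g$ is proper, convex, and lower semicontinuous, and $\partial g(\overline{x}) = N_C(\overline{x})$ contains $o_{\overline{x}}$ for every $\overline{x} \in C = \dom g$ by Example~\ref{ex-indicator-fct}, so the hypotheses of Remark~\ref{rmk-sum-rule-CAT0} on $f$ and $g$ are met.

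Next I would verify that $f+g$ attains its minimum, which is the content of the classical ``one set bounded'' assumption. Minimizing $f+g$ on $X$ is the same as minimizing $\dist(\cdot, D)$ on $C$. If $C$ is bounded, it is compact by the Hopf--Rinow theorem (as $X$ is complete and locally compact), so the continuous function $\dist(\cdot, D)$ attains its infimum on $C$. If instead $D$ is bounded, fix $x_0 \in C$; then $\{x \in C \mid \dist(x, D) \le \dist(x_0, D)\}$ is closed and bounded, hence compact, and $\dist(\cdot, D)$ attains its infimum over $C$ on this set. In either case there is $x \in C$ minimizing $f+g$, and $f$ is continuous at $x$, so Remark~\ref{rmk-sum-rule-CAT0} yields $([\gamma],r) \in \partial f(x)$ and $([\eta],s) \in \partial g(x) = N_C(x)$ that are opposite to each other. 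Since $C \cap D = \emptyset$ and $D$ is closed, $x \notin D$, so $\dist(x, D) > 0$, and Example~\ref{ex-dist-set} shows that every element of $\partial f(x)$ has norm $1$; thus $r = 1$, and as opposite elements of $T_x X$ have equal norms, $s = 1$ as well. This produces exactly the normalized opposite pair $([\gamma],1), ([\eta],1) \in T_x X$ claimed in the statement.

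It then remains to extract the two inequalities. For the first, apply Proposition~\ref{prop-subdiff-convex} to $([\gamma],1) \in \partial f(x)$: for every $y \in D$ we have $f(y) = 0$, so
\[\langle ([\gamma],1), ([\gamma^y], d(x,y)) \rangle \le f(y) - f(x) = -\dist(x, D),\]
and hence $\sup\{\langle ([\gamma],1), ([\gamma^y], d(x,y)) \rangle \mid y \in D\} \le -\dist(x, D) < 0$. For the second, $([\eta],1) \in N_C(x)$ together with Remark~\ref{rmk-normal-cone}(v) gives $\langle ([\eta],1), ([\gamma^y], d(x,y)) \rangle \le 0$ for every $y \in C$, so the corresponding supremum is at most $0$.

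The substantive work is all internal to Theorem~\ref{thm-sum-rule}; here the only points that require care are the compactness argument for existence of a minimizer, where the two boundedness cases must be handled separately (the conclusion is not symmetric in $C$ and $D$, so one cannot simply reduce to one case), and the use of Example~\ref{ex-dist-set} to see that a subgradient of $\dist(\cdot, D)$ at a point outside $D$ is automatically a unit vector---this is what guarantees that the opposite pair delivered by the sum rule is already normalized, and in particular that it is not the trivial pair $(o_x, o_x)$.
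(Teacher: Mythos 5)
Your proof is correct and follows essentially the same route as the paper: the same choice $f=\dist(\cdot,D)$, $g=\delta_C$, the same appeal to Remark~\ref{rmk-sum-rule-CAT0} together with Examples~\ref{ex-indicator-fct} and~\ref{ex-dist-set}, and the same extraction of the two inequalities via Proposition~\ref{prop-subdiff-convex} and Remark~\ref{rmk-normal-cone}(v). The only (harmless) difference is in establishing the minimizer: the paper uses a minimizing pair of sequences whose mutual distances are bounded, while you argue via compact sublevel sets in two cases; you also spell out more explicitly why the subgradient pair is automatically normalized, which the paper leaves implicit.
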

\begin{proof}
Define $f,g : X \to (-\infty, \infty]$ by $f(x) = \dist(x,D)$ and $g(x) = \delta_C(x)$ for all $x \in X$. These functions are proper and convex. Moreover, $f$ is continuous, while $g$ is lower semicontinuous. 

Denote by $\rho$ the distance between the sets $C$ and $D$. For $n \in \N$, choose $x_n \in C$ and $y_n \in D$ such that $d(x_n,y_n) < \rho + 1/n$. Since one of the sets $C$ or $D$ is bounded, one of the sequences $(x_n)$ or $(y_n)$ is bounded, and hence both sequences are bounded. By the Hopf-Rinow theorem, there exists $x \in C$ such that $d(x,P_D(x)) = \rho$.

Since $x$ is a minimum point of $f+g$, we can use Remark \ref{rmk-sum-rule-CAT0} and Examples \ref{ex-indicator-fct} and \ref{ex-dist-set} to obtain $([\gamma],1), ([\eta],1) \in T_{x}X$ opposite to each other such that $([\gamma],1) \in \partial f(x)$ and $([\eta],1) \in N_C(x)$. Applying Proposition \ref{prop-subdiff-convex}, $\langle ([\gamma],1),([\gamma^y],d(x,y)) \rangle \le - f(x)$ for all $ y \in D$, which yields the first desired inequality. The second one follows by using Remark \ref{rmk-normal-cone}(v).
\end{proof}

\section*{Acknowledgements}
This work was supported in part by National Science Foundation Grant DMS-2006990, by DGES Grant PGC2018-098474-B-C21, and by a grant of the Ministry of Research, Innovation and Digitization, CNCS/CCCDI -- UEFISCDI, project number PN-III-P1-1.1-TE-2019-1306, within PNCDI III.


\begin{thebibliography}{99}

\bibitem{AhAm}
B. Ahmadi Kakavandi, M. Amini, Duality and subdifferential for convex functions on complete $\CAT(0)$ metric spaces, Nonlinear Anal. 73 (2010), 3450--3455.

\bibitem{AleBerNik86}
A.D. Aleksandrov, V.N. Berestovski\u{\i}, I.G. Nikolaev, Generalized Riemannian spaces, Uspekhi Mat. Nauk 41 (1986), no. 3(249), 3--44, 240. English translation
in Russian Math. Surveys 41 (1986), no. 3, 1--54.

\bibitem{AleKapPet23}
S. Alexander, V. Kapovitch, A. Petrunin, Alexandrov Geometry: Foundations, preprint, arXiv: 1903.08539 [math.DG], 2023.

\bibitem{AmbGigSav05}
L. Ambrosio, N. Gigli, G. Savar\'{e}, Gradient Flows in Metric Spaces and in the Space of Probability Measures, Lectures Math. ETH Z\"{u}rich, Birkh\"{a}user, Basel, 2005.

\bibitem{Ari16}
D. Ariza-Ruiz, A. Fern\'{a}ndez-Le\'{o}n, G. L\'{o}pez-Acedo, A. Nicolae, Chebyshev sets in geodesic spaces, J. Approx. Theory 207 (2016), 265--282.

\bibitem{Bac14}
M. Ba\v{c}\'{a}k, Convex Analysis and Optimization in Hadamard Spaces, De Gruyter, Berlin, 2014.

\bibitem{Bac18}
M. Ba\v{c}\'{a}k, Old and new challenges in Hadamard spaces,  Jpn. J. Math. 18 (2023), 115--168.

\bibitem{BC17}
H.H. Bauschke, P.L. Combettes, Convex Analysis and Monotone Operator Theory in Hilbert Spaces, Second Edition, Springer, Cham, 2017.

\bibitem{BerNik08}
I.D. Berg, I.G. Nikolaev, Quasilinearization and curvature of Alexandrov spaces, Geom. Dedicata 133 (2008), 195--218.

\bibitem{BerPerSte16}
R. Bergmann, J. Persch, G. Steidl, A parallel Douglas-Rachford algorithm for minimizing ROF-like functionals on images with values in symmetric Hadamard manifolds, SIAM J. Imaging Sci. 9 (2016), 901--937.

\bibitem{Ber15}
D. Bertsekas, Convex Optimization Algorithms, Athena Scientific, Belmont, MA, 2015.

\bibitem{BilHolVog01}
L.J. Billera, S.P. Holmes, K. Vogtmann, Geometry of the space of phylogenetic trees, Adv. in Appl. Math. 27 (2001), 733--767.

\bibitem{Bou23}
N. Boumal, An Introduction to Optimization on Smooth Manifolds, Cambridge University Press, Cambridge, UK, 2023.

\bibitem{BoyParChu11}
S. Boyd, N. Parikh, E. Chu, B. Peleato, J. Eckstein, Distributed optimization and statistical learning via the alternating direction method of multipliers, Found. Trends Mach. Learn. 3 (2011), 1--122.

\bibitem{Bri99}
M.R. Bridson, A. Haefliger, Metric Spaces of Non-positive Curvature, Springer-Verlag, Berlin, 1999.

\bibitem{Bur01} 
D. Burago, Y. Burago, S. Ivanov, A Course in Metric Geometry, Amer. Math. Soc., Providence, RI, 2001.

\bibitem{ChaKohKum21}
P. Chaipunya, F. Kohsaka, P. Kumam, Monotone vector fields and generation of nonexpansive semigroups in complete $\CAT(0)$ spaces, Numer. Funct. Anal. Optim. 42 (2021), 989--1018.

\bibitem{ClLeStWo98}
F.H. Clarke, Y.S. Ledyaev, R.J. Stern, P.R. Wolenski, Nonsmooth Analysis and Control Theory, Springer-Verlag, New York, 1998.

\bibitem{DimWhi81}
C.R. Diminnie, A.G. White, Remarks on strict convexity and betweenness postulates, Demonstrat. Math. 14 (1981), 209--220.

\bibitem{EspFer09} 
R. Esp\'{i}nola, A. Fern\'{a}ndez-Le\'{o}n, $\CAT(k)$-spaces, weak convergence and fixed points, J.~Math. Anal. Appl. 353 (2009), 410--427.

\bibitem{GigNob21}
N. Gigli, F. Nobili, A differential perspective on gradient flows on $\CAT(\kappa)$-spaces and applications, J. Geom. Anal. 31 (2021), 11780--11818.

\bibitem{Jo95}
J. Jost, Convex functionals and generalized harmonic maps into spaces of nonpositive curvature, Comment. Math. Helv. 70 (1995), 659--673.

\bibitem{KohLopNic21}
U. Kohlenbach, G. L\'{o}pez-Acedo, A. Nicolae, A uniform betweenness property in metric spaces and its role in the quantitative analysis of the ``Lion-Man'' game, Pacific J. Math. 310 (2021), 181--212.

\bibitem{Kuw14}
K. Kuwae, Jensen's inequality on convex spaces, Calc. Var. Partial Differential Equations 49 (2014), 1359--1378.

\bibitem{LanSch97}
U. Lang, V. Schroeder, Kirszbraun's theorem and metric spaces of bounded curvature, Geom. Funct. Anal. 7 (1997), 535--560.

\bibitem{LiLoMa09}
C. Li, G. L\'{o}pez, V. Mart\'{i}n-M\'{a}rquez, Monotone vector fields and the proximal point algorithm on Hadamard manifolds, J. Lond. Math. Soc. (2) 79 (2009), 663--683.

\bibitem{LiLoMaWa11}
C. Li, G. L\'{o}pez, V. Mart\'{i}n-M\'{a}rquez, J.-H. Wang, Resolvents of set-valued monotone vector fields in Hadamard manifolds, Set-Valued Var. Anal. 19 (2011), 361--383.

\bibitem{LiMoWaYa11}
C. Li, B.S. Mordukhovich,  J. Wang, J.-C. Yao, Weak sharp minima on Riemannian manifolds, SIAM J. Optim. 21 (2011), 1523--1560.

\bibitem{May98}
U.F. Mayer, Gradient flows on nonpositively curved metric spaces and harmonic maps, Comm. Anal. Geom. 6 (1998), 199--253.

\bibitem{Mor06}
B.S. Mordukhovich, Variational Analysis and Generalized Differentiation, I. Basic Theory, Springer-Verlag, Berlin, 2006.

\bibitem{MorNam22}
B.S. Mordukhovich, N.M. Nam, Convex Analysis and Beyond. Volume 1: Basic Theory, Springer, Cham, 2022.

\bibitem{MoBhHo15}
M. Movahedi, D. Behmardi, S. Hosseini, On the density theorem for the subdifferential of convex functions on Hadamard spaces, Pacific J. Math. 276 (2015), 437--447.

\bibitem{Nic13}
A. Nicolae, Asymptotic behavior of averaged and firmly nonexpansive mapping in geodesic spaces, Nonlinear Anal. 87 (2013), 102--115.

\bibitem{Nik95}
I.G. Nikolaev, The tangent cone of an Aleksandrov space of curvature $\le K$, Manuscripta Math. 86 (1995), 137--147.

\bibitem{Oht07}
S.-I. Ohta, Convexities of metric spaces, Geom. Dedicata 125 (2007), 225--250. 

\bibitem{Roc70}
R.T. Rockafellar, Convex Analysis, Princeton University Press, Princeton, NJ, 1970. 

\bibitem{RocWet98}
R.T. Rockafellar, R.J.-B. Wets, Variational Analysis, Springer-Verlag, Berlin, 1998.

\bibitem{SilBerHer22}
M. Silva Louzeiro, R. Bergmann, R. Herzog, Fenchel duality and a separation theorem on Hadamard manifolds, SIAM J. Optim. 32 (2022), 854--873.

\bibitem{Smi94}
S.T. Smith, Optimization techniques on Riemannian manifolds, in Hamiltonian and Gradient Flows, Algorithms and Control, Fields Inst. Commun., 3, AMS, Providence, RI, 1994, pp. 113--136.

\bibitem{SraNowWri11}
S. Sra, S. Nowozin, S.J. Wright, Optimization for Machine Learning, MIT Press, Cambridge, MA, 2011.

\bibitem{Str16}
J. Streets, The consistency and convergence of K-energy minimizing movements, Trans. Amer. Math. Soc. 368 (2016), 5075--5091.

\bibitem{Udri94} 
C. Udri\c{s}te, Convex Functions and Optimization Methods on Riemannian Manifolds, Kluwer Academic Publishers, Dordrecht, The Netherlands, 1994.

\bibitem{ZhaSra16}
H. Zhang, S. Sra, First-order methods for geodesically convex optimization, in Proceedings of the 29th Annual Conference on Learning Theory (COLT), 2016, pp. 1617--1638.

\end{thebibliography}
\end{document}